\documentclass[AMS,Times1COL]{WileyNJDv5} 

\usepackage{boldfonts}
\usepackage{mydef}
\usepackage{bbm}

\newcommand{\bmom}{\mathbf{m}}
\newcommand{\inner}[2]{\langle #1, #2 \rangle}        

\articletype{Article Type}%

\received{Date Month Year}
\revised{Date Month Year}
\accepted{Date Month Year}
\journal{Numerical Methods for Partial Differential Equations}
\volume{00}
\copyyear{2023}
\startpage{1}

\raggedbottom

\begin{document}

\title{Artificial compressibility method for the incompressible Navier-Stokes equations with variable density}

\author[1]{Loic Cappanera}

\author[1]{Salvatore Giordano}

\authormark{Cappanera \textsc{et al.}}
\titlemark{Artificial compression methods for incompressible multiphase flows}

\address{\orgdiv{Department of Mathematics}, \orgname{University of Houston}, \orgaddress{Houston, \state{Texas}, \country{USA}}}



\corres{Corresponding author: Loic Cappanera. \email{lmcappan@central.uh.edu}}


\fundingInfo{the National Science Foundation NSF (L. Cappanera, grant
number DMS-2208046)}

\abstract[Abstract]{
We introduce a novel artificial compressibility technique to  approximate the incompressible Navier-Stokes equations with variable fluid properties such as density and dynamical viscosity. The proposed scheme used the couple pressure and momentum, equal to the density times the velocity, as primary unknowns. It also involves an adequate treatment of the diffusive operator such that treating the nonlinear convective term explicitly leads to a scheme with time independent stiffness matrices that is suitable for pseudo-spectral methods. The stability and temporal convergence of the semi-implicit version of the scheme is established under the hypothesis that the density is approximated with a method that conserves the minimum-maximum principle. Numerical illustrations confirm that both the semi-implicit and explicit scheme are stable and converge with order one under classic CFL condition. Moreover, the proposed scheme is shown to perform better than a momentum based pressure projection method, previously introduced by one of the authors, on setups involving gravitational waves and immiscible multi-fluids in a cylinder. 
}

\keywords{multiphase flows, incompressible flows, artificial compressibility technique, finite element method, pseudo-spectral method}


\maketitle

\section{Introduction}
\label{sec:intro}

The numerical approximation of incompressible flows with variable density and viscosity plays a crucial role in many engineering and physics applications in fluid mechanics. For instance, such setups occur in nature in the stratified layers of the Earth's mantle and also in the stratified layers of the ocean and their interaction with the atmosphere. Similarly, variable density flows and their interaction with magnetic field and heat convection play an important part in the design of liquid metal batteries and aluminum Hall-H\'{e}roult production cell.
All of the above problems are governed by the incompressible Navier-Stokes equations coupled with a transport equation that governs the dynamics of the density, see equation \eqref{eq:Navier_Stokes}. The case of constant density flows has been the topic of a broad literature these past sixty years. The main challenge consists of enforcing the incompressibility constraints, which is addressed using three main class of methods. First, one can consider the coupled problem which leads to solving a linear system with a saddle-point structure, see \cite{cahouet1988some,fortin1983augmented,benzi2006augmented,olshanskii2022recycling}. The other widely popular approach consists of using projection type techniques that were originally introduced by Chorin \cite{Chor68} and Temam \cite{tema69} which have led to many improvements since, see \cite{MR2250931} and references therein for a review of projection methods. Eventually, another approach consists of the artificial compressibility techniques that were introduced in \cite{chorin1967numerical,temam1968methode, ladyzhenskaya1969mathematical}. This last approach presents a better computational complexity compared to projection methods as it does not require solving Poisson problem to update the pressure, which scales in $h^{-2}$ with $h$ being the mesh size. However, its limited accuracy in time to the order one, until recent developments \cite{guermond2015high} that require solving $n$ parabolic equations to get an artificial compressibility method of order $n$, have limited its popularity.

In the context of immiscible incompressible multiphase flows, i.e. flows with variable density, another difficulty consists of tracking the evolution of the interface between the fluids to reconstruct accurately the fluids properties such as its density. It led to the development of methods such as the level set technique \cite{osher1988fronts,osher1993level}, 
and the phase field methods \cite{anderson1998diffuse,liu2003phase,chiu2011conservative}.
Additionally, approximating efficiently the Navier-Stokes equations also becomes more challenging as the Navier-Stokes system now involves a partial differential equation with variable coefficients in the time derivative operator, and also in the diffusive operator when the viscosity is variable.
The most popular methods of approximation use the velocity as primary unknown and projection-correction methods \cite{badalassi2003computation,GLLN09,chiu2011conservative,
dong2012time}. 
Recent works also focus on the development of artificial compressibility methods that have introduced a high order method in time \cite{lundgren2023high}, an entropy stable method for discontinuous finite element \cite{manzanero2020entropy}, and an unconditional stable method based on SAV technology \cite{zhang2023mass}. One of the main drawback of the above methods is that the resulting linear algebra requires to reassemble the mass matrix, that takes the form $\rho \partial_t \bu$, and the diffusive operator at each time iteration which is either computationally expensive for high order finite element discretization or cannot be made implicit for spectral and pseudo-spectral methods. To develop algorithms that are suitable for spectral methods, and that uses a time independent linear algebra, one of the authors proposed and studied a novel projection method that uses the momentum as primary unknown in \cite{cappanera_2018,cappanera_vu_2024}. This method is shown to be stable and accurate on various setups like rising bubble and rotating multifluids in cylinders. Unfortunately, it shows some robustness limitations when approximating complex multi-fluid with strong interface deformation like the one occurring in aluminum production cell \cite{nore2021feasibility,herreman2023stability}. It led the authors to use the idea of \cite{cappanera_2018} to develop a novel method that uses an artificial compressiblity technique to enforce the incompressibility constraints. The resulting method has been shown to be robust and accurate for aluminum production cell setup in \cite{nore2021feasibility}, however, the analysis of its stability and temporal convergence has not been establish yet. Thus, we aim to fill this gap with this paper.

The paper is organized as follows. 
Section \ref{sec:problem_description} introduces the governing equations, some notations, and the main idea of the artificial compressibility technique we study in this paper. In section \ref{sec:stab_algo}, we establish the stability of the proposed method under the assumption that the density is approximated with a method that satisfies the minimum-maximum principle. Then, under similar assumptions, we establish the time error analysis of the proposed algorithm in section \ref{sec:time_error_analysis}. The section \ref{sec:num_results} reports numerical results obtained with two different codes that either use finite element or pseudo-spectral methods for the spatial discretization. Eventually, we give concluding remarks in the section \ref{sec:conclusion}.

\section{Problem description and artificial compressibility method} 
\label{sec:problem_description}

In this section, we first describe the Navier-Stokes system for problems with variable density and viscosity and some classic notations. Then, focusing on the approximation of the couple velocity-pressure, we describe the key ingredients that allow us to derive an artificial compressibility method that is suitable for spectral methods and that also has the advantage of involving a time-independent stiffness matrix.

\subsection{The incompressible Navier-Stokes equations with variable density and viscosity}
\label{sec:Navier_eq}
We consider an open bounded domain $\Omega$ in $\Real^d$, with $d=2,3$, and a time interval $[0,T]$. The incompressible Navier-Stokes system on $\Omega \times [0,T]$ is made up of the mass conservation, momentum conservation, and incompressibility equations. The system can be expressed as follows:
\begin{subequations}\label{eq:Navier_Stokes}
\begin{align}
 &\partial_t\rho   + \DIV\bmom =0, \label{eq:mass_equation}\\ 
 &\partial_t\bmom   + \DIV( \bmom{\otimes}\bu)  
-  2 \DIV (\eta(\rho) \varepsilon(\bu)) + \GRAD p 
 =  \bef, 
\label{eq:momentum_equation}\\
&\DIV \bu   =    0,  \label{eq:incompressibility_equation} 
\end{align}
\end{subequations}
where $\bu$ is the velocity, $\bmom=:\rho\bu$ is the momentum, $\rho$ is the density,
$\eta$ is the dynamical viscosity, $p$ is the pressure, $\bef$ is a source term and
$\varepsilon(\bu):=\GRAD^s \bu = \frac12 (\GRAD \bu +(\GRAD \bu)\tr)$
is the strain rate tensor. The above system is supplemented with initial condition and boundary conditions. To avoid nonrelevant difficulties, we assume that homogeneous Dirichlet boundary conditions are enforced on the velocity. We also note that we assume that the dynamical viscosity, $\eta$, is a Lipschitz function of the density $\rho$. 



\subsection{Notations and preliminaries}
\label{sec:notations_preliminaries}
 
To approximate in time the system of equations \eqref{eq:Navier_Stokes}, 
we introduce a time step denoted by $\tau$ and set $t_n=n\tau$ for all 
integers $n\geq0$. 
For any time dependent function $\phi$, we set $\phi^n=\phi(t_n)$.
To simplify notations, we denote by $\phi^\tau$ the discrete time
sequence $(\phi^n)_{n\geq0}$ and we introduce the time-increment 
operator $\delta$ defined as follows:
\begin{equation}\label{eq:def_delta_op}
\delta \phi^n = \phi^n - \phi^{n-1}.
\end{equation}
We also let $N$ be the integer such that $N\tau = T$ the final time.

In the rest of the paper, vector variables are written in bold.
We use the standard notation $\| \cdot \|_{L^2}$, respectively 
$\| \cdot \|_{L^\infty}$, to represent the $L^2$ norm, respectively 
$L^\infty$ norm, of a scalar. 
The $\bL^2$ and $\bL^\infty$ norms of a vector or a matrix are denoted by $\| \cdot \|_{\bL^2}$ and  $\| \cdot \|_{\bL^\infty}$.
For any vector $\bu$, we denote by $\|\bu\|_{l^2}$ the euclidean norm of $\bu$ and $\|\varepsilon(\bu)\|_{l^2}$ the induced matrix norm of $\varepsilon(\bu)$.

We conclude this sections by reminding some well known inequalities that will be used in the following sections. There exists a constant $c_1$ such that for all vector $\bu$ in $\bH^1(\Omega)$ such that $\bu_{|\partial \Omega}=0$, the Poincar\'e inequality holds: 
\begin{equation}
\label{eq:prelim_Poincare}
\| \bu^n \|_{\bL^2} \leq c_1 \|\GRAD \bu^n \|_{\bL^2}.
\end{equation}
The first Korn inequality yields the existence of a constant $c_2$ such that
for all $\bu\in\bH^1(\Omega)$:
\begin{equation}
\label{eq:prelim_Korn}
\| \GRAD \bu^n \|_{\bL^2} \leq c_2 \|\varepsilon(\bu^n) \|_{\bL^2}.
\end{equation}

Similarly, one can show that there exists a constant $c_3$ such that for all 
$\bu\in\bH^1(\Omega)$ we have:
\begin{equation}
\label{eq:prelim_div}
\| \sqrt{\eta} \DIV \bu^n \|_{L^2} 
\leq 
c_3 \| \sqrt{\eta} \varepsilon(\bu^n) \|_{\bL^2},
\end{equation}
where $\eta$, the dynamical viscosity, can be any positive function bounded from above and away from zero. We will also use the following polarization identity, that holds for any real $(a,b)$ without referring to it:
$$2(a-b)a = a^2 - b^2 + (a-b)^2.$$

\subsection{Artificial compressibility method based on momentum approximation}
\label{sec:art_comp_idea}
We now aim to introduce a numerical method for the system of equations
\eqref{eq:momentum_equation}-\eqref{eq:incompressibility_equation}
that is suitable for spectral methods and that uses a time-independent stiffness matrix.
The development of such a method faces three main challenges, described below, that we propose to overcome by using the momentum as a primary unknown combined to a method based on artificial compressibility technique for the Navier-Stokes equations.

The first challenge consists of approximating the mass matrix $\partial_t(\rho \bu)$, or $\rho \partial_t \bu$, which cannot be made implicit when using spectral methods due to the product between the density with the velocity or its time derivative. 
As shown by the authors in \cite{cappanera_2018}, this issue can be overcome by using the momentum $\bmom:=\rho \bu$ as primary unknown for the momentum equation \eqref{eq:momentum_equation}. 
The mass matrix then becomes $\partial_t \bmom$ which can be approximated straightforwardly by any spectral or finite element methods. We note that such strategy is commonly used in compressible fluid mechanics.

The second main challenge is related to the treatment of the diffusive term $-\DIV(\eta \varepsilon(\bu))$. 
As the dynamical viscosity $\eta$ is time and space dependent, and that $\bu=\frac{1}{\rho} \bmom$, this term can not be made implicit for spectral methods in the context of parallel simulations. 
This difficulty can be answered by adapting techniques introduced in
\cite{gottlieb1977numerical,badalassi2003computation,dong2012time,cappanera_2018} 
that leads to rewrite the diffusive term 
as follows:
$- \DIV(\bar{\nu} \varepsilon(\bmom))  
+ \DIV(\bar{\nu} \varepsilon(\bmom)) -\eta \varepsilon(\bu)))$
with $\bar{\nu}$ a constant larger than the kinematic viscosity $\nu:=\eta/\rho$. The first term can then be treated implicitly while the correction is made explicit. We note that, for pseudo spectral methods, $\bar{\nu}$ can also be defined as a function that only depends of the space coordinates that are not discretized with spectral elements.

Eventually, the last difficulty is to enforce the incompressibility condition
\eqref{eq:incompressibility_equation}.
Most of the algorithms for incompressible multiphase flows enforce this condition
 by using a projection-correction method 
\cite{badalassi2003computation,cappanera_2018,chiu2011conservative,
dong2012time,GLLN09}
which leads to solve a Poisson problem
to update the pressure. To avoid solving a Poisson problem, whose condition
number scales in $\calO(h^{-2})$ with $h$ the mesh size, we propose to enforce
the incompressiblity condition by developing an algorithm based on artificial
compressibility methods that were originally introduced in \cite{chorin1967numerical, temam1968methode, ladyzhenskaya1969mathematical}. 
We remind that the main idea consists of modifying equation
\eqref{eq:incompressibility_equation} as follows:
\begin{equation}
 \epsilon \partial_t p + \DIV \bu = 0,
\end{equation}
where $\epsilon$ is a tunable constant. Introducing $\lambda:=\tau/\epsilon$ and discretizing in time the above equation reads:
\begin{equation}
 p^{n+1} - p^n + \lambda \DIV \bu^{n+1} = 0.
\end{equation}
Thus, the gradient of the pressure at time $t_{n+1}$ can be approximated using the gradient of the pressure at time $t_n$ and the gradient of the velocity's divergence.
The time discretization of the momentum equation \eqref{eq:momentum_equation} then leads to the approximation of a term of the form $ -\lambda \GRAD(\DIV \bu^{n+1})$. 
Since we aim to use the momentum as
primary unknown, this term is treated in a similar way than the diffusion term. That is, we rewrite the term as follows
$-\lambda \GRAD( \frac{1}{\underline{\rho}}\DIV \bmom) 
+ \lambda \GRAD( \frac{1}{\underline{\rho}}\DIV \bmom - \DIV \bu)$
with $\underline{\rho}$ a positive constant smaller than the density $\rho$. As shown in the following sections, it yields a stable and convergent scheme when the first term is made implicit while the correction is made explicit.

Disregarding the approximation of the density and the nonlinear term $\DIV(\bmom\otimes\bu)$, the algorithm we propose to approximate the system \eqref{eq:momentum_equation}-\eqref{eq:incompressibility_equation} with the couple momentum-pressure reads:
\begin{equation}
\frac{\delta \bmom^{n+1}}{\tau}
- 2 \bar\nu \DIV ( \varepsilon ( \bmom^{n+1} - \bmom^{*,n+1}))
- \Bar{\lambda} \GRAD ( \DIV (\bmom^{n+1} - \bmom^{*,n+1}))
=
 \bef^{n+1}
 - \GRAD p^n 
+ 2\DIV(\eta^{n+1} \varepsilon(\bu^n))
+ \lambda \GRAD (\DIV \bu^n),
\end{equation}

\begin{equation}
p^{n+1} = p^n -  \lambda \DIV\bu^{n+1},
\end{equation}
where $\bmom^{*,n+1}:=\rho^{n+1} \bu^n$ and $\Bar{\lambda}>\frac{\lambda}{\underline{\rho}}$. We note that this time stepping is
suitable for spectral method as nonlinearity are all made explicit. Moreover, the stiffness
matrix is indeed time independent as the variables $\bar{\nu}, \underline{\rho}$ and 
$\lambda$ are constants. The use of an artificial compressibility technique also allows us
to avoid solving a Poisson problem to update the pressure. All these features
combined makes this algorithm efficient for parallel and large scale computing with 
spectral and finite element methods.
\section{Stability of a semi-implicit algorithm}
\label{sec:stab_algo}
We introduce a time discretization of the Navier-Stokes equations \eqref{eq:Navier_Stokes}. We assume that the density is approximated in a way that satisfies the minimum-maximum principle and focus on establishing the stability of the resulting momentum-pressure algorithm. Unlike the model scheme introduced in the previous section, we reintroduce the nonlinear term 
$\DIV(\bmom\otimes \bu)$ which is treated semi-implicitly for analysis purposes. 
The resulting algorithm is shown to be stable under some conditions on the time step $\tau$ and the sequence $\rho^\tau$.


\subsection{Approximation of Density and maximum principle hypothesis.}
\label{sec:approximation_density}

As the main novelty of this work is how the momentum equation and incompressibility constraints \eqref{eq:momentum_equation}-\eqref{eq:incompressibility_equation} are approximated, we do not focus on the mass conservation equation. 
Meaning, we assume that the approximated density satisfies a scheme of the form:
\begin{equation}
\label{eq:approximation_density}
\frac{\delta\rho^{n+1}}{\tau}+ \bu^n\cdot\nabla\rho^n=R_\rho^{n+1},
\end{equation}
where the residual term $R_\rho$ is assumed to be consistent. 

We further assume that the resulting sequence $\rho^\tau$ satisfies the maximum-minimum, meaning that it satisfies
\begin{equation}\label{hyp:min_max_principle}
\rho_{\min} \leq \rho^n \leq \rho_{\max},  \qquad \forall n\geq 0,
\end{equation}
where $\rho_\text{min}=\min_{\bx\in\Omega} \rho^0(\bx) $
and $\rho_\text{max}=\max_{\bx\in\Omega} \rho^0(\bx) $.

We note that many bound preserving methods that satisfies the above properties are available in the literature. For instance, one can use the flux corrected transport technology \cite{boris1973flux,zalesak1979fully} or
total variation diminishing limiters \cite{harten1984class,harten1997high}. One can also use the more recent algebraic flux correction schemes \cite{kuzmin2012algebraic,badia2017monotonicity,barrenechea2017algebraic,guermond2017conservative,kuzmin2020monolithic}.
We refer to \cite{kuzmin2012flux} for a review on flux-corrected transport methods.

To conclude, we recall that the dynamical viscosity is assumed to be a Lipschitz function of the density. As a consequence, there exists a positive constant $\alpha$ such that for all $(\rho_1,\rho_2) \in [\rho_{\min},\rho_{\max}]$, we have:
\begin{equation}
\label{hyp:Lipschitz_continuity}
|\eta(\rho_1)-\eta(\rho_2)|\leq \alpha |\rho_1-\rho_2|.
\end{equation}
Note that the maximum-minimum principle assumption on $\rho^\tau$, see \eqref{hyp:min_max_principle}, implies that $\eta^\tau$ also satisfies the maximum-minimum principle, that is 
\begin{equation}
\label{hyp:min_max_principle_eta}
0
< \eta_{min}
\leq \eta^n  = \eta(\rho^n) 
\leq \eta_{max}.
\end{equation}

\subsection{Time marching algorithm}
\label{sec:time_algo}
After initializing the unknowns $(\rho^0,\bmom^0,p^0)$ with the initial conditions associated to
the problem \eqref{eq:Navier_Stokes}, the proposed time marching algorithm reads as follows. Let $n\in \mathbb{N}$ and assume that $(\rho^n,\bmom^n,\bu^n,p^n)$ are known. Then we follow the sequential scheme below to update the density, momentum, pressure, and velocity.

Step 1. Find $\rho^{n+1}$ the solution of:
\begin{equation}\label{eq:algo_step_rho}
\frac{\delta \rho^{n+1}}{\tau} + \bu^n \cdot \GRAD \rho^n  = R_\rho^{n+1}.
\end{equation}

Step 2. Compute the momentum $\bmom^{n+1}$ by solving:
\begin{multline}\label{eq:algo_step_mom}
\frac{\delta \bmom^{n+1}}{\tau}
- 2\bar\nu\DIV ( \varepsilon ( \bmom^{n+1} - \bmom^{*,n}))
- 2\DIV(\eta^{n+1} \varepsilon(\bu^n))
- \Bar{\lambda} \GRAD ( \DIV (\bmom^{n+1} - \bmom^{*,n}))
 - \lambda \GRAD (\DIV \bu^n)
+  \GRAD p^n 
\\
+ \DIV(\rho^n\bu^{n+1}{\otimes} \bu^n) 
- \frac{1}{2} \rho^n\bu^{n+1}\DIV \bu^{n}
- \frac{1}{2} \bu^{n+1} R_\rho^{n+1}
=
\bef^{n+1},
\end{multline}
with $\bmom^{*,n}=\rho^{n+1} \bu^n$. We note that the above step involves the quantity $\bu^{n+1}$, we chose this notation for simplicity. However, in practice it is replaced by $\frac{1}{\rho^{n+1}}\bmom^{n+1}$ because we use the momentum as primary unknown.

Step 3. Compute the velocity $\bu^{n+1}$ using:
\begin{equation}\label{eq:algo_step_vel}
\bu^{n+1} = \frac{1}{\rho^{n+1}} \bmom^{n+1}
\end{equation}

Step 4. Update the pressure $p^{n+1}$ as follows:
\begin{equation}\label{eq:algo_step_pre}
p^{n+1} = p^n -  \lambda \DIV \bu^{n+1}.
\end{equation}

As shown in the following, to ensure the 
stability of the algorithm the quantities $\bar{\nu}$ and $\bar{\lambda}$ are defined as follows:
\begin{equation}\label{eq:def_nu_rho_bar}
\bar{\nu}= 1.1\|\eta(\rho^0)/\rho^0\|_{L^\infty}, \qquad
\bar{\lambda} = 1.1 \lambda/\underline{\rho}, \quad \text{ with } 1/\underline{\rho} = \|1/\rho^0\|_{L^\infty}.
\end{equation}
We note that our stability analysis also relies on additional restrictive conditions on the time step $\tau$ and density approximation $\rho^\tau$, see 
\eqref{eq:theorem1_hypo1}-\eqref{eq:theorem1_hypo2}. These conditions are similar to the one used in \cite{cappanera_vu_2024} to establish the stability of a momentum-based projection method. While being restrictive, as discussed in the following sections, these conditions do not need to be enforced in practice.

\subsection{Stability of the algorithm}
\label{sec:time_stability}

We establish the stability of the algorithm introduced in section \ref{sec:time_algo}. For the sake of simplicity, we assume that homogeneous Dirichlet boundary conditions are imposed on the momentum. As the density is bounded away from zero, it is equivalent to assuming that $\bu_{|\partial\Omega}=0$.

\begin{theorem} \label{thm:stab_bdf1}
Let $\gamma_1\in(0,\frac{1}{\sqrt{2}})$, 
$\gamma_2\in(0,\frac{1}{\sqrt{2} c_3})$ 
and $(\gamma_3,\gamma_4)\in(0,1)^2$
be constants. Let $\bar{\nu}$ and $\bar{\lambda}$ satisfy \eqref{eq:def_nu_rho_bar}. Assume that the approximation of the density $\rho^\tau$ is given by an algorithm of the form \eqref{eq:approximation_density} that satisfies the bound preserving principle \eqref{hyp:min_max_principle}. Assume that the time step $\tau$ satisfies for all $n\geq0$ the following conditions:
\begin{equation}\label{eq:theorem1_hypo1}
\tau^{1/2} \| \frac{\bar{\nu}2\sqrt{2}}{\sqrt{\rho^n\eta^{n+1}}}\GRAD \rho^{n+1} \|_{\bL^\infty} 
 \leq \gamma_1, \qquad
\tau^{1/2} \| \frac{\Bar{\lambda} \sqrt{2}}{\sqrt{\rho^n\eta^{n+1}}}\GRAD \rho^{n+1} \|_{\bL^\infty}
 \leq \gamma_2.
\end{equation}
Also assume that the sequence $\rho^\tau$ satisfies:
\begin{equation}\label{eq:theorem1_hypo2}
\|  \frac{\bar{\nu}\delta \rho^{n+1}}{\eta^{n+1}}\|_{L^\infty}
\leq \gamma_3, \qquad
\| \frac{\Bar{\lambda} \delta \rho^{n+1}}{\lambda} \|_{L^\infty} 
\leq \gamma_4.
\end{equation}
Then the sequences $(\bmom^\tau, p^\tau, \bu^\tau)$ defined by the scheme 
\eqref{eq:algo_step_mom}-\eqref{eq:algo_step_pre}-\eqref{eq:algo_step_vel}
satisfy the following stability inequality for all $n\geq0$:
\begin{multline}\label{eq:theo_1a}
\|\sqrt{\rho^{n+1}} \bu^{n+1}\|_{\bL^2}^2
+2\tau \bar{\nu} \|\sqrt{\rho^{n+1}} \varepsilon(\bu^{n+1}) \|_{\bL^2}^2
+\tau\Bar{\lambda} \|\sqrt{\rho^{n+1}} \DIV\bu^{n+1} \|_{L^2}^2
+\frac{\tau}{\lambda} \|p^{n+1}\|_{\bL^2}^2 
+\tau \|\sqrt{\eta^{n+1}} \varepsilon(\bu^{n+1}) \|_{\bL^2}^2
\\ \leq
\|\sqrt{\rho^{n}} \bu^{n}\|_{\bL^2}^2
+ 2\tau \bar{\nu} \|\sqrt{\rho^n} \varepsilon(\bu^{n}) \|_{\bL^2}^2
+\tau\Bar{\lambda} \|\sqrt{\rho^{n}} \DIV\bu^{n} \|_{L^2}^2
+\frac{\tau}{\lambda} \|p^{n}\|_{L^2}^2 
+ 2 \tau \| \bef^{n+1} \|_{\bL^2} \| \bu^{n+1} \|_{\bL^2}.
\end{multline}
As a result, there exists a constant $C$ independent of the time step such that:
\begin{equation}\label{eq:theo_1b}
\|\sqrt{\rho^{n+1}}  \bu^{n+1}\|_{\bL^2}^2
\leq
\|\sqrt{\rho^{0}}  \bu^{0}\|_{\bL^2}^2
+ \tau \bar{\nu} \| \sqrt{\rho^0} \varepsilon(\bu^0)  \|_{\bL^2}^2
+\tau \Bar{\lambda} \| \sqrt{\rho^0} \DIV\bu^0\|_{L^2}^2
+\frac{\tau}{\lambda} \|p^{0}\|_{L^2}^2 
+ C \tau \sum_{k=0}^{n} \| \bef^{k+1}\|_{\bL^2}^2.
\end{equation}
\end{theorem}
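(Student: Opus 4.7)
The plan is to derive the energy balance \eqref{eq:theo_1a} by testing the momentum equation \eqref{eq:algo_step_mom} with $\bu^{n+1}$, multiplying the result by $2\tau$, integrating over $\Omega$, and then manipulating each group of terms into telescoping pieces plus absorbable remainders. For the time-derivative block, I write $\rho^{n+1}\bu^{n+1}-\rho^n\bu^n = \rho^n\delta\bu^{n+1}+\delta\rho^{n+1}\bu^{n+1}$ and apply the polarization identity with weight $\rho^n$, obtaining $\|\sqrt{\rho^{n+1}}\bu^{n+1}\|_{\bL^2}^2-\|\sqrt{\rho^n}\bu^n\|_{\bL^2}^2+\|\sqrt{\rho^n}\delta\bu^{n+1}\|_{\bL^2}^2$ plus a spurious $\int\delta\rho^{n+1}|\bu^{n+1}|^2$. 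The three designed semi-implicit reactive terms in \eqref{eq:algo_step_mom} are built to cancel this piece: integrating $\DIV(\rho^n\bu^{n+1}\otimes\bu^n)$ by parts against $\bu^{n+1}$ produces $\frac{1}{2}\int\DIV(\rho^n\bu^n)|\bu^{n+1}|^2$, and substituting $\bu^n\cdot\GRAD\rho^n = R_\rho^{n+1}-\delta\rho^{n+1}/\tau$ from \eqref{eq:algo_step_rho} produces exactly the opposite of the spurious term.

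For the diffusion block $-2\bar\nu\DIV\varepsilon(\bmom^{n+1}-\bmom^{*,n})-2\DIV(\eta^{n+1}\varepsilon(\bu^n))$, I exploit the key identity $\bmom^{n+1}-\bmom^{*,n}=\rho^{n+1}\delta\bu^{n+1}$ together with the product rule $\varepsilon(\rho^{n+1}\delta\bu^{n+1}) = \rho^{n+1}\varepsilon(\delta\bu^{n+1})+S_1$, where $S_1 := \frac{1}{2}(\GRAD\rho^{n+1}\otimes\delta\bu^{n+1}+\delta\bu^{n+1}\otimes\GRAD\rho^{n+1})$. Applying the $\rho^{n+1}$-weighted polarization to the main piece, and the identity $2ab = a^2+b^2-(a-b)^2$ with weight $\eta^{n+1}$ to the explicit piece, produces the telescoping $2\tau\bar\nu\|\sqrt{\rho^{n+1}}\varepsilon(\bu^{n+1})\|_{\bL^2}^2$ and $2\tau\|\sqrt{\eta^{n+1}}\varepsilon(\bu^{n+1})\|_{\bL^2}^2$ contributions, together with a $\rho^{n+1}\to\rho^n$ remainder that is controlled by \eqref{eq:theorem1_hypo2} through $\gamma_3<1$ (using $\bar\nu\rho^{n+1}\geq\eta^{n+1}$ from \eqref{eq:def_nu_rho_bar}) and a non-negative dissipation $2\tau\|\sqrt{\bar\nu\rho^{n+1}-\eta^{n+1}}\varepsilon(\delta\bu^{n+1})\|_{\bL^2}^2$ that I simply drop. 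The cross term $4\tau\bar\nu(S_1,\varepsilon(\bu^{n+1}))$ is bounded by Cauchy--Schwarz and Young, then absorbed into the $\|\sqrt{\rho^n}\delta\bu^{n+1}\|_{\bL^2}^2$ and $\tau\|\sqrt{\eta^{n+1}}\varepsilon(\bu^{n+1})\|_{\bL^2}^2$ budgets using the first inequality of \eqref{eq:theorem1_hypo1}; the factor $2\sqrt 2$ there and the restriction $\gamma_1<1/\sqrt 2$ are calibrated precisely so that this absorption leaves enough on the LHS.

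An entirely parallel analysis treats the artificial-compressibility block. Decomposing $\DIV(\rho^{n+1}\delta\bu^{n+1}) = \rho^{n+1}\DIV\delta\bu^{n+1}+\delta\bu^{n+1}\cdot\GRAD\rho^{n+1}$, applying the $\rho^{n+1}$-weighted polarization to the $\bar\lambda$ part and $2ab$ to the $\lambda$ part, and then substituting $\DIV\bu^{n+1} = (p^n-p^{n+1})/\lambda$ from \eqref{eq:algo_step_pre} into $-2\tau(p^n,\DIV\bu^{n+1})$ yields the telescoping $\tau\bar\lambda\|\sqrt{\rho^{n+1}}\DIV\bu^{n+1}\|_{L^2}^2$ and $(\tau/\lambda)\|p^{n+1}\|_{L^2}^2$ contributions; crucially the extra $\tau\lambda\|\DIV\bu^{n+1}\|_{L^2}^2$ produced by the pressure update exactly cancels the one produced by the $\lambda(\DIV\bu^n,\DIV\bu^{n+1})$ polarization. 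The $\rho^{n+1}\to\rho^n$ shift on the resulting RHS term uses the second half of \eqref{eq:theorem1_hypo2} with $\gamma_4<1$ (noting $\bar\lambda\rho^{n+1}\geq\lambda$, which follows from \eqref{eq:def_nu_rho_bar} since $\underline{\rho}\leq\rho_{\min}$), and the divergence cross term is bounded via the second inequality of \eqref{eq:theorem1_hypo1}, with $\tau\|\sqrt{\eta^{n+1}}\DIV\bu^{n+1}\|_{L^2}^2$ traded for $\tau\|\sqrt{\eta^{n+1}}\varepsilon(\bu^{n+1})\|_{\bL^2}^2$ via \eqref{eq:prelim_div}.

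Collecting everything yields \eqref{eq:theo_1a}. For \eqref{eq:theo_1b} I would bound $2\tau\|\bef^{n+1}\|_{\bL^2}\|\bu^{n+1}\|_{\bL^2}$ by Young, control $\|\bu^{n+1}\|_{\bL^2}$ through \eqref{eq:prelim_Poincare} and \eqref{eq:prelim_Korn} to turn it into $\|\varepsilon(\bu^{n+1})\|_{\bL^2}$, absorb into the $\tau\|\sqrt{\eta^{n+1}}\varepsilon(\bu^{n+1})\|_{\bL^2}^2$ dissipation (using $\eta_{\min}>0$), and finally sum \eqref{eq:theo_1a} telescopically from $0$ to $n$. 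The main technical obstacle is carrying the two cross terms (from $\bar\nu$ and $\bar\lambda$) simultaneously against the single $\|\sqrt{\rho^n}\delta\bu^{n+1}\|_{\bL^2}^2$ budget from the time derivative and the $\tau\|\sqrt{\eta^{n+1}}\varepsilon(\bu^{n+1})\|_{\bL^2}^2$ viscous budget without losing any of the targeted positive terms on the LHS, which is precisely the role of the four constants $\gamma_1,\gamma_2,\gamma_3,\gamma_4$ and their upper bounds in \eqref{eq:theorem1_hypo1}--\eqref{eq:theorem1_hypo2}.
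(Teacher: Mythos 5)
Your proposal is correct and follows essentially the same route as the paper's proof: testing \eqref{eq:algo_step_mom} with $2\tau\bu^{n+1}$, cancelling the spurious $\int_\Omega \delta\rho^{n+1}\|\bu^{n+1}\|_{l^2}^2\diff\bx$ term through the density scheme and the semi-implicit convective/reactive terms, using weighted polarization identities for the viscous and grad-div blocks with the cross terms absorbed via \eqref{eq:theorem1_hypo1} and the $\rho^{n+1}\to\rho^n$ shifts via \eqref{eq:theorem1_hypo2}, exploiting the cancellation of $\pm\tau\lambda\|\DIV\bu^{n+1}\|_{L^2}^2$ between the pressure update and the explicit grad-div term, and concluding with Cauchy--Schwarz, Korn, Poincar\'e, Young and a telescopic sum. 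The bookkeeping of the two cross terms against the single $\|\sqrt{\rho^n}\delta\bu^{n+1}\|_{\bL^2}^2$ budget that you flag is exactly how the $\sqrt{2}$ factors and the bounds on $\gamma_1,\gamma_2$ are used in the paper.
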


\begin{remark} We note that the hypothesis 
\ref{eq:theorem1_hypo1}-\ref{eq:theorem1_hypo2} are very restrictive.
Indeed, for very steep or unsmooth density profile, the gradient of density acts like $h^{-1}$ with $h$ the computational mesh size. 
As a consequence, the condition \ref{eq:theorem1_hypo1} can read $\tau \leq C h^2$ which is not desirable. 
We will show numerically in section \ref{sec:num_results} that the fully explicit algorithm introduced in section \ref{sec:final_algo} is stable under classic CFL condition ($\tau \propto h$) even for unsmooth density functions. 
Thus, the conditions \ref{eq:theorem1_hypo1}-\ref{eq:theorem1_hypo2} are heuristics and do not need to be enforced in practice.
\end{remark}

\begin{proof} 
We aim to test the momentum equation \eqref{eq:algo_step_mom} with $2\tau\bu^{n+1}$ and integrate over the domain $\Omega$. The following describes how to bound the resulting terms from the left hand side of the momentum equation
\eqref{eq:algo_step_mom}. 
We note that the term associated with the time derivative of the momentum satisfies the relation:
\[
2 \int_\Omega \delta \bmom^{n+1} \bu^{n+1} \diff\bx
= \delta \|\sqrt{\rho^{n+1}} \bu^{n+1} \|_{\bL^2}^2 
+ \|\sqrt{\rho^{n}} \delta \bu^{n+1} \|_{\bL^2}^2 
+ \int_\Omega \|\bu^{n+1}\|_{l^2}^2 \delta \rho^{n+1} \diff\bx.
 \]
Moreover, multiplying \eqref{eq:approximation_density} by $\tau \|\bu^{n+1}\|_{l^2}^2$ and integrating over $\Omega$ leads to the following identity:
\begin{equation}\notag
\int_\Omega \|\bu^{n+1}\|_{l^2}^2 \delta \rho^{n+1} \diff\bx
=
- \int_\Omega  2 \tau \bu^{n+1} \cdot \left(  
\DIV(\rho^n\bu^{n+1}{\otimes} \bu^n) 
- \frac{1}{2} \rho^n\bu^{n+1}\DIV \bu^{n}
\right) \diff \bx 
+ \tau\int_\Omega\|\bu^{n+1}\|_{l^2}^2 R_\rho^{n+1} \diff\bx.
\end{equation}
Noticing that 
$\varepsilon(\rho\bu):\varepsilon(\bv)\geq \rho \varepsilon(\bu):\varepsilon(\bv) - \|\bu\|_{l^2} \| \GRAD \rho\|_{l^\infty} \|\varepsilon(\bv)\|_{l^2}$ 
and that 
$\bmom^{n+1}-\bmom^{*,n}=\rho^{n+1}\delta\bu^{n+1}$,
the terms associated to the div-grad operators, i.e. the second and third term of the left handside of \eqref{eq:algo_step_mom}, satisfy:
$$
\begin{array}{ccl}
4 \tau \int_\Omega \bar{\nu} \varepsilon(\rho^{n+1} \delta\bu^{n+1})
:\varepsilon(\bu^{n+1}) \diff\bx
& \geq &
 2\tau \|\sqrt{\bar{\nu}\rho^{n+1}} \varepsilon(\bu^{n+1}) \|_{\bL^2}^2 
- 2\tau \|\sqrt{\bar{\nu}\rho^{n+1}} \varepsilon(\bu^{n}) \|_{\bL^2}^2 
+ 2\tau \|\sqrt{\bar{\nu}\rho^{n+1}} \varepsilon(\delta\bu^{n+1}) \|_{\bL^2}^2
\\ [1.2ex]
&  &
- 4 \tau \int_\Omega \bar{\nu} \|\delta\bu^{n+1}\|_{l^2} \| \GRAD\rho^{n+1}\|_{l^\infty}
\| \varepsilon(\bu^{n+1})\|_{l^2} \diff\bx
\\ [1.2ex]
& \geq &
2 \tau \|\sqrt{\bar{\nu}\rho^{n+1}} \varepsilon(\bu^{n+1}) \|_{\bL^2}^2 
- 2 \tau \|\sqrt{\bar{\nu}\rho^{n+1}} \varepsilon(\bu^{n}) \|_{\bL^2}^2 
+ 2 \tau \|\sqrt{\eta^{n+1}} \varepsilon(\delta\bu^{n+1}) \|_{\bL^2}^2
\\ [1.2ex]
& &
- 4 \tau \int_\Omega \bar{\nu} \|\delta\bu^{n+1}\|_{l^2} \| \GRAD\rho^{n+1}\|_{l^\infty}
\| \varepsilon(\bu^{n+1})\|_{l^2} \diff\bx,
\end{array}
$$
and
\begin{equation}\notag
4 \tau \int_\Omega \eta^{n+1} \varepsilon(\bu^{n}):\varepsilon(\bu^{n+1})\diff\bx
= 2\tau \|\sqrt{\eta^{n+1}} \varepsilon(\bu^{n+1}) \|_{\bL^2}^2 
+ 2 \tau \|\sqrt{\eta^{n+1}} \varepsilon(\bu^{n}) \|_{\bL^2}^2 
- 2\tau \|\sqrt{\eta^{n+1}} \varepsilon(\delta\bu^{n+1}) \|_{\bL^2}^2,
\end{equation}
where we used the definition of $\bar{\nu}:=\| \eta(\rho^0)/\rho^0\|_{\bL^\infty}$
and the maximum principle hypothesis \eqref{hyp:min_max_principle} to get the bound:
\[
\| \sqrt{\bar{\nu}\rho^{n+1}}\varepsilon(\delta\bu^{n+1}) \|_{\bL^2}^2  
\geq \|  \sqrt{\eta^{n+1}}\varepsilon(\delta\bu^{n+1}) \|_{\bL^2}^2.
\] 
Using the hypothesis $\eqref{eq:theorem1_hypo1}$ on the time step $\tau$, 
i.e. $\tau^{1/2} \| \frac{\bar{\nu}2\sqrt{2}}{\sqrt{\rho^n\eta^{n+1}}}\GRAD \rho^{n+1} \|_{\bL^\infty} 
 \leq \gamma_1$,
we infer that:
$$
\begin{array}{ccl}
4 \tau \int_\Omega \bar{\nu} \|\delta\bu^{n+1}\|_{l^2} \| \GRAD\rho^{n+1}\|_{l^\infty}
\| \varepsilon(\bu^{n+1})\|_{l^2} \diff\bx
& \leq &  
2\tau 
\| \frac{\bar{\nu}2 \sqrt{2}}{\sqrt{\rho^n\eta^{n+1}}}
\GRAD\rho^{n+1}\|_{\bL^\infty} 
\| \sqrt{\frac{\rho^n}{2}}  \delta \bu^{n+1}\|_{\bL^2}
 \| \sqrt{\eta^{n+1}}\varepsilon(\bu^{n+1})\|_{\bL^2}
\\ [1.3ex]
& \leq &
\| \sqrt{\frac{\rho^n}{2}} \delta \bu^{n+1} \|_{\bL^2}^2
+ \tau \gamma_1^2 \| \sqrt{\eta^{n+1}} \varepsilon(\bu^{n+1}) \|_{\bL^2}^2.
\end{array}
$$
Testing the terms associated to the grad-div operator with $2\tau\bu^{n+1}$, 
meaning the fourth and fifth terms of \eqref{eq:algo_step_mom}, reads:
$$
\begin{array}{ccl}
2 \tau \int_\Omega \Bar{\lambda} \DIV(\rho^{n+1}\delta\bu^{n+1})\DIV\bu^{n+1}   \diff\bx
& = &
\tau \| \sqrt{\Bar{\lambda} \rho^{n+1}}\DIV\bu^{n+1} \|_{\bL^2}^2 
- \tau \| \sqrt{\Bar{\lambda} \rho^{n+1}}\DIV\bu^{n}\|_{\bL^2}^2 
+ \tau \| \sqrt{\Bar{\lambda} \rho^{n+1}}\DIV \delta \bu^{n+1} \|_{\bL^2}^2 
\\ [1.2ex]
& &
+ 2 \tau \int_\Omega \Bar{\lambda}\delta \bu^{n+1} \cdot
\GRAD \rho^{n+1} \DIV\bu^{n+1}\diff\bx
\\ [1.2ex]
& \geq &
\tau \| \sqrt{\Bar{\lambda} \rho^{n+1}}\DIV\bu^{n+1} \|_{\bL^2}^2 
- \tau  \| \sqrt{\Bar{\lambda} \rho^{n+1}}\DIV\bu^{n} \|_{\bL^2}^2
+ \tau \lambda\| \DIV \delta\bu^{n+1}\|_{\bL^2}^2
\\ [1.2ex]
& &
- 2 \tau \int_\Omega \Bar{\lambda} \|\delta\bu^{n+1}\|_{l^2} 
\|\GRAD \rho^{n+1} \|_{l^\infty} 
|\DIV \bu^{n+1}| \diff \bx,
\end{array}
$$
and
\[
2 \tau \int_\Omega \lambda \DIV\bu^n\DIV\bu^{n+1} \diff\bx
=
\tau \lambda \left(
\| \DIV\bu^{n+1} \|_{\bL^2}^2 
+ \| \DIV\bu^{n}  \|_{\bL^2}^2 
- \|  \DIV\delta \bu^{n+1} \|_{\bL^2}^2 
\right),
\]
where the inequality:
\[ 
 \| \sqrt{\Bar{\lambda} \rho^{n+1}} \DIV \delta\bu^{n+1}\|_{\bL^2}^2
 \geq \lambda \| \DIV \delta\bu^{n+1}\|_{\bL^2}^2
\]
follows from the definition of $\underline{\rho}$, see \eqref{eq:def_nu_rho_bar},
and the maximum principle hypothesis \eqref{hyp:min_max_principle}. 
Notice that the restriction \eqref{eq:theorem1_hypo1}, i.e.
$\tau^{1/2} \| \frac{\Bar{\lambda}\sqrt{2}}{\sqrt{\rho^n\eta^{n+1}}}\GRAD\rho^{n+1}
 \|_{\bL^\infty} \leq \gamma_2$,
on the time step $\tau$ yields:
$$
\begin{array}{ccl}
2 \tau \int_\Omega \Bar{\lambda} \|\delta \bu^{n+1}\|_{l^2}
\|\GRAD \rho^{n+1}\|_{l^\infty}  | \DIV\bu^{n+1}| \diff\bx
& \leq &
2 \tau \| \tfrac{\Bar{\lambda} \sqrt{2}}{\sqrt{\rho^n\eta^{n+1}}} \GRAD \rho^{n+1} \|_{\bL^\infty} 
\|   \sqrt{\tfrac{\rho^n}{2}} \delta \bu^{n+1} \|_{\bL^2} 
\| \sqrt{\eta^{n+1}} \DIV\bu^{n+1} \|_{\bL^2}
\\ [1.3ex]
& \leq &
\| \sqrt{\tfrac{\rho^n}{2}} \delta \bu^{n+1}  \|_{\bL^2}^2   
+ \tau \gamma_2^2\|\sqrt{\eta^{n+1}} \DIV \bu^{n+1}  \|_{\bL^2}^2.
\end{array}
$$
Rewriting $\rho^{n+1}=\delta\rho^{n+1} +\rho^n$, the hypothesis \eqref{eq:theorem1_hypo2} yields the following inequalities:
\[ 
\| \sqrt{\bar{\nu}\rho^{n+1}} \varepsilon(\bu^n) \|_{\bL^2}^2
\leq 
\gamma_3 \|  \sqrt{\eta^{n+1}} \varepsilon(\bu^n) \|_{\bL^2}^2 
+ \| \sqrt{\bar{\nu}\rho^n} \varepsilon(\bu^n) \|_{\bL^2}^2,
\]
\[
\| \sqrt{\Bar{\lambda} \rho^{n+1}} \DIV\bu^n \|_{\bL^2}^2 
\leq 
\gamma_4 \lambda \|  \DIV\bu^n \|_{\bL^2}^2 
+ \| \sqrt{\Bar{\lambda} \rho^n} \DIV\bu^n \|_{\bL^2}^2.
\]
Eventually, using integration by parts and \eqref{eq:algo_step_pre}, 
the following relation holds:
\[
2 \tau \int_\Omega \GRAD p^n \cdot \bu^{n+1} \diff \bx 
=
\frac{\tau}{\lambda} \delta \| p^{n+1}\|_{\bL^2}^2 
- \tau \lambda \| \DIV\bu^{n+1}\|_{\bL^2}^2 .
\]
Combining all of the above identities and inequalities shows that testing the equation
\eqref{eq:algo_step_mom}  with $2\tau\bu^{n+1}$ reads:
$$
\begin{array}{ccl}
\delta \| \sqrt{\rho^{n+1} }\bu^{n+1}\|_{\bL^2}^2 
& + &
\frac{\tau}{\lambda} \delta \|p^{n+1}\|_{L^2}^2 
+ 2\tau \delta \| \sqrt{\bar{\nu}\rho^{n+1}} \varepsilon(\bu^{n+1}) \|_{\bL^2}^2 
+ \tau \Bar{\lambda} \delta \| \sqrt{\rho^{n+1}} \DIV\bu^{n+1} \|_{L^2}^2 
\\ [1.2ex]
& + &
\tau (2-\gamma_1^2-\gamma_2^2c_3^2) \| \sqrt{\eta^{n+1}} \varepsilon(\bu^{n+1}) \|_{\bL^2}^2 
+ 2\tau(1- \gamma_3) \| \sqrt{\eta^{n+1} \varepsilon(\bu^n)} \|_{\bL^2}^2 
+\tau \lambda (1-\gamma_4) \| \DIV \bu^n \|_{L^2}^2 
\\ [1.2ex]
& \leq &
2\tau \int_\Omega \bef^{n+1} \cdot \bu^{n+1} \diff \bx.
\end{array}
$$
where we used the relation $\|\sqrt{\eta^{n+1}} \DIV \bu^{n+1} \|_{\bL^2}
\leq c_3  \| \sqrt{\eta^{n+1}}\varepsilon(\bu^{n+1}) \|_{\bL^2} $ introduced in section
\ref{sec:notations_preliminaries}. We remind that $\gamma_1\in(0,\frac{1}{\sqrt{2}})$, 
$\gamma_2\in(0,\frac{1}{\sqrt{2}c_3})$
and $(\gamma_3,\gamma_4)\in(0,1)^2$, 
thus the terms involving these constants can all be dropped. 
The estimate \eqref{eq:theo_1a} follows readily using Cauchy-Schwarz inequality.
The energy bound \eqref{eq:theo_1b} is then the consequence of a standard telescopic argument combined with the Korn, Poincar\'e and Young inequalities.
\end{proof}

\subsection{Discussion on normalization of artificial compressibility parameter}
For single phase flow, artificial compressibility techniques are known to be efficient while setting $\lambda=1$. However for flows with variable density, setting $\lambda=1$ may not always yield a good control of the velocity divergence. For instance, this phenomena has been observed in \cite{lundgren2023high} which led the author to vary the value of $\lambda$ between $1$ and $5000$ depending on the setup (e.g. ratio of density, effective Reynolds number).
As we do not wish to tune the parameter $\lambda$ depending on the physical setting considered, we perform numerous numerical studies that led us to introduce the following rescaling of $\lambda$:
\begin{align}\label{eq:normalization_alpha}
\lambda =  \max(1,\bar{\nu} \underline{\rho} ) \lambda.
\end{align}
The idea behind this rescaling consists of renormalizing the parameter $\lambda$ such that the second stability condition in \eqref{eq:theorem1_hypo1}, that involves the quantity $\bar{\lambda} = \lambda / \underline{\rho}$, becomes analogous with the first stability condition of  \eqref{eq:theorem1_hypo1} that involves $\bar{\nu}$.
Thanks to this renormalization, all numerical simulations presented in the 
section~\ref{sec:num_results} are performed with $\lambda=1$ and allow us to recover expected stability and convergence properties.

\section{Time error analysis}
\label{sec:time_error_analysis}
In this section, we study the temporal convergence of the scheme \eqref{eq:algo_step_mom}-\eqref{eq:algo_step_vel} where we assume that the density is approximated with a first-order scheme that guarantees the min-max principle. Our main results are summarized in Theorem \ref{theorem:error_bound} and Corollary \ref{corollary_order_cvg}, where we show that the error in velocity is order half in time in $ l^\infty(0,T;\bL^2(\Omega))$ and in $ l^2(0,T;\bH^1(\Omega))$ norms. We will show in the next section, using numerical simulations, that the effective convergence rate is order one in time.

\subsection{Notations and hypothesis}
Let $W^{m,p}(\Omega)$ be the standard Sobolev space. We denote by $H^s(\Omega)$ the space $W^{s,2}(\Omega)$ and we set $H^0(\Omega):=L^2(\Omega)$.
We assume that the solution of \eqref{eq:mass_equation}-\eqref{eq:incompressibility_equation} has the following regularity:
\begin{equation}
\label{hyp:regularity solutions}
\rho\in L^\infty(0,T;L^\infty(\Omega))\cup W^{1,\infty}(0,T;L^\infty(\Omega)),
\; \bu\in \bL^\infty(0,T;\bW^{1,\infty}(\Omega))\cup \bW^{2,\infty}(0,T;\bL^\infty(\Omega)),\;
p\in  W^{1,\infty}(0,T;L^\infty(\Omega)).
\end{equation}
\label{sec:prelim_error_proof}
We define the following quantities related to the error in density, pressure and velocity:
\begin{equation}
\label{def:error_density}
e_\rho^n:=\rho(t_n)-\rho^n,
\end{equation}
\begin{equation}
\label{def:error_pressure}
e_p^n:=p(t_n)-p^n,
\end{equation}
\begin{equation}
\label{def:error_velocity}
\be_u^n:=\bu(t_n)-\bu^n.
\end{equation}
We also define the error for dynamical viscosity by:
\begin{equation}
\label{eq:5.1}
e_\eta^n:=\eta(t_n)-\eta^n.
\end{equation}
Using the Lipschitz hypothesis on $\eta$, 
see \eqref{hyp:Lipschitz_continuity}, 
there exists a positive constant $\alpha$ such that the following holds for all $0\leq n \leq N$:
\begin{equation}
\label{eq:approximation_error_density}
|e_\eta^n|\leq \alpha |e_\rho^n| .
\end{equation}
Eventually, as in the previous section, we focus our analysis on the Navier-Stokes part of the governing system \eqref{eq:Navier_Stokes}. That is, we study the convergence of the velocity-pressure couple. It leads us to assume that the scheme used to approximate the density, see \eqref{eq:algo_step_rho}, yields a residual $R_\rho$
and a density error $e_\rho$ that are both controlled in norm $l^2(0,T;L^2(\Omega))$ by the time step $\tau$ and the velocity error $\be_u$. Meaning we assume that there exists a positive constant $C$ independent of the time steps and the approximation such that the following holds for all $0\leq n \leq N$:
\begin{equation}
\label{hyp:residual_density}
\tau \sum_{k=0}^{n} \|R_\rho^k\|_{L^2}^2
\leq 
C\tau^2
+C \tau \sum_{k=0}^{n-1}\|\be_u^k\|_{\bL^2}^2,
\end{equation}
\begin{equation}
\label{hyp:error_density}
\tau \sum_{k=0}^{n}\|e_\rho^k\|_{L^2}^2
\leq 
C\tau^2
+C \tau \sum_{k=0}^{n-1}\|\be_u^k\|_{\bL^2}^2.
\end{equation}

\subsection{Preliminaries results}

We start our analysis by introducing an equivalent scheme, see Lemma \ref{lemma_equiv_scheme}, which uses the couple velocity-pressure as primary unknowns. Then, we establish error equations for the velocity and pressure, see Lemma \ref{lemma_vel_error_eq}-\ref{lemma_pre_error_eq}, that will be used in the following section to establish the convergence of the proposed method.

\begin{lemma}
\label{lemma_equiv_scheme}
Assume that the approximation of the density $\rho^\tau$ is given by an algorithm of the form \eqref{eq:approximation_density} that satisfies \eqref{hyp:min_max_principle}. Then, solving \eqref{eq:algo_step_rho}-\eqref{eq:algo_step_pre} with unknowns $(\bmom,p)$ is equivalent to solving the following scheme with unknowns $(\bu,p)$:
\begin{equation}
\label{eq:approximation_velocity}
\frac{\rho^n \delta \bu^{n+1}}{\tau}+\rho^n (\bu^n \cdot \nabla) \bu^{n+1}-2 \Bar{\nu}\nabla \cdot{( \varepsilon(\rho^{n+1}\delta \bu^{n+1})}-2\nabla \cdot (\eta^{n+1} \varepsilon(\bu^n))-\Bar{\lambda}\nabla (\nabla \cdot (\rho^{n+1} \delta \bu^{n+1}))
\end{equation}
\begin{equation}\notag
-\lambda \nabla (\nabla \cdot \bu^n)+\nabla p^n+\frac{1}{2}\rho^n\bu^{n+1}\nabla \cdot \bu^n+\frac{1}{2} R_\rho^{n+1}\bu^{n+1}=\bf{f}^{n+1},
\end{equation}
where the pressure is still determined using \eqref{eq:algo_step_pre}.
\end{lemma}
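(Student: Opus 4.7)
The plan is to show that the momentum scheme \eqref{eq:algo_step_mom} is pointwise equivalent to the velocity scheme \eqref{eq:approximation_velocity} by substituting $\bmom^{n+1} = \rho^{n+1}\bu^{n+1}$ and using the density update \eqref{eq:algo_step_rho}. The min-max hypothesis \eqref{hyp:min_max_principle} ensures that $\rho^{n+1}$ is bounded away from zero, so the change of variable from $\bmom$ to $\bu$ is well-defined and reversible; this is what makes the two formulations equivalent.

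First I would handle the linear terms. With $\bmom^{*,n}=\rho^{n+1}\bu^n$, one has the key identity
\[
\bmom^{n+1}-\bmom^{*,n}=\rho^{n+1}\bu^{n+1}-\rho^{n+1}\bu^n=\rho^{n+1}\delta\bu^{n+1},
\]
so the terms $-2\bar\nu\DIV(\varepsilon(\bmom^{n+1}-\bmom^{*,n}))$ and $-\bar\lambda\GRAD(\DIV(\bmom^{n+1}-\bmom^{*,n}))$ immediately become the corresponding terms of \eqref{eq:approximation_velocity}. The viscous term $-2\DIV(\eta^{n+1}\varepsilon(\bu^n))$, the pressure gradient $\GRAD p^n$, the source $\bef^{n+1}$ and the artificial stabilizer $-\lambda\GRAD(\DIV\bu^n)$ are unchanged.

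The main obstacle, though routine, lies in combining the discrete time derivative with the convective block. I would split
\[
\delta\bmom^{n+1}=\rho^{n+1}\bu^{n+1}-\rho^n\bu^n=\rho^n\delta\bu^{n+1}+(\delta\rho^{n+1})\bu^{n+1},
\]
and then use \eqref{eq:algo_step_rho} to replace $\delta\rho^{n+1}/\tau=-\bu^n\cdot\GRAD\rho^n+R_\rho^{n+1}$. This gives
\[
\frac{\delta\bmom^{n+1}}{\tau}=\frac{\rho^n\delta\bu^{n+1}}{\tau}-(\bu^n\cdot\GRAD\rho^n)\bu^{n+1}+R_\rho^{n+1}\bu^{n+1}.
\]
Independently, expanding the convective term yields
\[
\DIV(\rho^n\bu^{n+1}\otimes\bu^n)=\rho^n(\bu^n\cdot\GRAD)\bu^{n+1}+\rho^n(\DIV\bu^n)\bu^{n+1}+(\bu^n\cdot\GRAD\rho^n)\bu^{n+1}.
\]
Adding these two expressions together with $-\tfrac12\rho^n\bu^{n+1}\DIV\bu^n-\tfrac12\bu^{n+1}R_\rho^{n+1}$ from the momentum scheme, the terms involving $\bu^n\cdot\GRAD\rho^n$ cancel exactly, the $R_\rho^{n+1}\bu^{n+1}$ reduces to $\tfrac12R_\rho^{n+1}\bu^{n+1}$, and the $\rho^n(\DIV\bu^n)\bu^{n+1}$ reduces to $\tfrac12\rho^n\bu^{n+1}\DIV\bu^n$. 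What remains is precisely the left-hand side of \eqref{eq:approximation_velocity}.

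Finally, I would note that the pressure update \eqref{eq:algo_step_pre} only involves $\bu^{n+1}=\bmom^{n+1}/\rho^{n+1}$ and the previous pressure $p^n$, so it is identical in both formulations; and that the equivalence is truly bijective because, under \eqref{hyp:min_max_principle}, the map $\bu\mapsto\rho\bu$ is a linear isomorphism at each time level. The argument is entirely algebraic: no inequalities, no test functions, no regularity assumptions beyond what is needed to make \eqref{eq:algo_step_mom} meaningful.
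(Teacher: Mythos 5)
Your proof is correct and follows essentially the same route as the paper: both rely on the identity $\bmom^{n+1}-\bmom^{*,n}=\rho^{n+1}\delta\bu^{n+1}$, the expansion of $\DIV(\rho^n\bu^{n+1}\otimes\bu^n)$, and the substitution of the density update \eqref{eq:algo_step_rho} multiplied by $\bu^{n+1}$ to cancel the $(\bu^n\cdot\GRAD\rho^n)\bu^{n+1}$ terms and halve the $R_\rho^{n+1}\bu^{n+1}$ and $\rho^n(\DIV\bu^n)\bu^{n+1}$ contributions. Your added remark that the min-max principle makes the map $\bu\mapsto\rho\bu$ invertible, so the equivalence is genuinely two-way, is a nice explicit touch that the paper leaves implicit.
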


\begin{proof}
Observe that \eqref{eq:algo_step_rho} implies:
\begin{equation}
\label{eq:3.4}
\frac{\rho^{n+1}\bu^{n+1}}{\tau} =
\frac{\rho^n \bu^{n+1}}{\tau} -(\bu^n\cdot\nabla\rho^n)\bu^{n+1}+R_\rho^{n+1}\bu^{n+1}.
\end{equation}
By expanding the term $\DIV(\rho^n\bu^{n+1}{\otimes} \bu^n) ,$ and using $\bmom^n=\rho^n \bu^n$, the equation \eqref{eq:algo_step_mom} can be rewritten as:
\begin{equation}
\label{eq:3.5}
 \frac{\rho^{n+1}\bu^{n+1}-\rho^n \bu^n}{\tau}+\rho^n (\bu^n \cdot \nabla) \bu^{n+1} +(\bu^n \cdot \nabla \rho^n)\bu^{n+1}+\rho^n\bu^{n+1}(\nabla\cdot \bu^n)-2 \Bar{\nu}\nabla \cdot{( \varepsilon(\rho^{n+1}\delta \bu^{n+1})}
\end{equation}
\begin{equation}\notag
 -2\nabla \cdot (\eta^{n+1} \varepsilon(\bu^n))-\Bar{\lambda}\nabla (\nabla \cdot (\rho^{n+1}\delta \bu^{n+1}))-\lambda \nabla(\nabla \cdot \bu^n)+\nabla p^n-\frac{1}{2}\rho^n(\nabla\cdot \bu^n)\bu^{n+1}-\frac{1}{2} R_\rho^{n+1}\bu^{n+1}=\bf{f}^{n+1}.
\end{equation}
We conclude by substituting \eqref{eq:3.4} into \eqref{eq:3.5}.
\end{proof}

We now establish the error equation for the velocity in the following lemma.
\begin{lemma}
\label{lemma_vel_error_eq}
Assuming that the solutions of \eqref{eq:mass_equation}-\eqref{eq:incompressibility_equation} satisfy the regularity hypothesis \eqref{hyp:regularity solutions}, and under the hypothesis of Lemma \ref{lemma_equiv_scheme}, the following holds for all integers $0\leq n \leq N$:
\begin{multline}
\label{eq:lemma_error_velocity}
\|\sqrt{\rho^{n+1}} \be_u^{n+1}\|_{\bL^2}^2
-\|\sqrt{\rho^n}\be_u^n\|_{\bL^2}^2
+\|\sqrt{\rho^n}\delta \be_u^{n+1}\|_{\bL^2}^2
+4\tau\|\sqrt{\eta^{n+1}}\varepsilon(\be_u^{n+1})\|_{\bL^2}^2
\\
= -2\tau \inner{R_u^{n+1}}{\be_u^{n+1}}
-2\tau \inner{ e_\rho^{n+1}(u(t_{n+1})\cdot \nabla)\bu(t_{n+1})}{\be_u^{n+1}}
+2\tau \inner{((\rho^n\bu(t_n)-\rho^{n+1}\bu(t_{n+1}))\cdot \nabla)\bu(t_{n+1})}{\be_u^{n+1}}
\\
-2\tau \inner{\rho^n (\be_u^n \cdot \nabla) \bu(t_{n+1})} {\be_u^{n+1}}
-4\tau \inner{e_\eta^{n+1}\varepsilon(\bu(t_{n+1}))}{\varepsilon(\be_u^{n+1})}
+4\tau \inner{\Bar {\nu}(\varepsilon(\rho^{n+1} \delta \bu^{n+1}))}{\varepsilon(\be_u^{n+1})}
\\
-4\tau \inner{ (\eta^{n+1}\varepsilon(\delta \bu^{n+1})}{\varepsilon(\be_u^{n+1})}
-2\tau\Bar{\lambda}\inner{\nabla(\nabla \cdot (\rho^{n+1} \delta \bu^{n+1}))}{\be_u^{n+1}}
-2\tau\inner{\nabla p(t_{n+1})}{\be_u^{n+1}}
\\
-2\tau\inner{\lambda\nabla(\nabla \cdot \bu^n)}{\be_u^{n+1}}
+2\tau\inner{\nabla p^n}{\be_u^{n+1}}+\tau\inner{\rho^n(\nabla\cdot \bu^n)u(t_{n+1})}{\be_u^{n+1}}
+\tau\inner{R_\rho^{n+1}\bu(t_{n+1})}{\be_u^{n+1}},
\end{multline}
where
\begin{equation}
\label{eq:lemma_error_velocity_def_Ru}
\begin{array}{crl}
R_\bu^{n+1}
& := &  
\rho(t_{n+1}) \partial_t \bu(t_{n+1})-\rho^n \frac{\bu(t_{n+1})-\bu(t_n)}{\tau}
\\[1.2ex]
& = &
\partial_t \bu(t_{n+1})(\rho(t_{n+1})-\rho(t_n))-\rho^n \left(\frac{\bu(t_{n+1})-\bu(t_n)}{\tau}-\partial_t \bu(t_{n+1})\right)+ e_\rho^n \partial_t \bu(t_{n+1}).
\end{array}
\end{equation}
\end{lemma}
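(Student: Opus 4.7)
The approach is to subtract the equivalent velocity-based scheme \eqref{eq:approximation_velocity} from the continuous momentum equation at time $t_{n+1}$. Before subtracting, I use mass conservation \eqref{eq:mass_equation} to rewrite \eqref{eq:momentum_equation} in the advective form $\rho\partial_t\bu + \rho(\bu\cdot\nabla)\bu - 2\DIV(\eta\varepsilon(\bu)) + \nabla p = \bef$, which matches the algebraic structure of \eqref{eq:approximation_velocity}. The identity \eqref{eq:lemma_error_velocity} then follows by testing the resulting error equation with $2\tau\be_u^{n+1}$, integrating over $\Omega$, and collecting the contributions term by term.

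For the time derivative, the definition of $R_u^{n+1}$ in \eqref{eq:lemma_error_velocity_def_Ru} gives $\rho(t_{n+1})\partial_t\bu(t_{n+1}) - \rho^n\delta\bu^{n+1}/\tau = R_u^{n+1} + \rho^n\delta\be_u^{n+1}/\tau$, and polarization applied to $2\int_\Omega\rho^n\delta\be_u^{n+1}\cdot\be_u^{n+1}\diff\bx$ produces $\|\sqrt{\rho^n}\be_u^{n+1}\|_{\bL^2}^2 - \|\sqrt{\rho^n}\be_u^n\|_{\bL^2}^2 + \|\sqrt{\rho^n}\delta\be_u^{n+1}\|_{\bL^2}^2$. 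For the viscous term, I write $\bu(t_{n+1}) - \bu^n = \be_u^{n+1} + \delta\bu^{n+1}$ and split $\eta(t_{n+1})\varepsilon(\bu(t_{n+1})) - \eta^{n+1}\varepsilon(\bu^n) = e_\eta^{n+1}\varepsilon(\bu(t_{n+1})) + \eta^{n+1}\varepsilon(\be_u^{n+1}) + \eta^{n+1}\varepsilon(\delta\bu^{n+1})$; after integration by parts against $2\tau\be_u^{n+1}$, the middle piece contributes the desired $4\tau\|\sqrt{\eta^{n+1}}\varepsilon(\be_u^{n+1})\|_{\bL^2}^2$ on the left, while the two remaining pieces together with the artificial-viscosity correction $-4\tau\inner{\bar\nu\varepsilon(\rho^{n+1}\delta\bu^{n+1})}{\varepsilon(\be_u^{n+1})}$ produce the viscous entries on the right. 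The pressure and grad-div correction terms transfer directly with no manipulation.

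The main obstacle is the treatment of the nonlinear convective difference $A := \rho(t_{n+1})(\bu(t_{n+1})\cdot\nabla)\bu(t_{n+1}) - \rho^n(\bu^n\cdot\nabla)\bu^{n+1}$ together with the skew-symmetric corrections $\tfrac{1}{2}\rho^n\bu^{n+1}\DIV\bu^n$ and $\tfrac{1}{2}R_\rho^{n+1}\bu^{n+1}$ present in \eqref{eq:approximation_velocity}. By inserting the intermediates $\rho^{n+1}(\bu(t_{n+1})\cdot\nabla)\bu(t_{n+1})$ and $\rho^n(\bu^n\cdot\nabla)\bu(t_{n+1})$ and then writing $\rho^{n+1}\bu(t_{n+1}) - \rho^n\bu^n = [\rho^{n+1}\bu(t_{n+1}) - \rho^n\bu(t_n)] + \rho^n\be_u^n$, I decompose $A$ into four pieces: the three expected source terms of the lemma plus a diagonal remainder $\rho^n(\bu^n\cdot\nabla)\be_u^{n+1}$. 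Splitting $\bu^{n+1} = \bu(t_{n+1}) - \be_u^{n+1}$ inside the two corrections peels off the trailing entries $\tau\inner{\rho^n(\DIV\bu^n)\bu(t_{n+1})}{\be_u^{n+1}}$ and $\tau\inner{R_\rho^{n+1}\bu(t_{n+1})}{\be_u^{n+1}}$ of the lemma, and leaves the group $2\tau\inner{\rho^n(\bu^n\cdot\nabla)\be_u^{n+1}}{\be_u^{n+1}} + \tau\inner{\rho^n(\DIV\bu^n)\be_u^{n+1}}{\be_u^{n+1}} + \tau\inner{R_\rho^{n+1}\be_u^{n+1}}{\be_u^{n+1}}$. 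Integration by parts on the first term (using the homogeneous boundary condition on $\be_u^{n+1}$) together with the density update $\bu^n\cdot\nabla\rho^n = R_\rho^{n+1} - \delta\rho^{n+1}/\tau$ extracted from \eqref{eq:algo_step_rho} collapses this group to $\int_\Omega\delta\rho^{n+1}|\be_u^{n+1}|^2\diff\bx$, which is exactly what is needed to convert $\|\sqrt{\rho^n}\be_u^{n+1}\|_{\bL^2}^2$ into $\|\sqrt{\rho^{n+1}}\be_u^{n+1}\|_{\bL^2}^2$ on the left-hand side and complete the identification with \eqref{eq:lemma_error_velocity}.
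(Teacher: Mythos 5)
Your proposal is correct and follows essentially the same route as the paper's proof: rewrite the momentum equation in advective form via mass conservation, subtract the equivalent velocity--pressure scheme of Lemma \ref{lemma_equiv_scheme}, test with $2\tau\be_u^{n+1}$, and use the discrete density equation (together with integration by parts on the convective remainder $\rho^n(\bu^n\cdot\nabla)\be_u^{n+1}$) to convert $\|\sqrt{\rho^n}\be_u^{n+1}\|_{\bL^2}^2$ into $\|\sqrt{\rho^{n+1}}\be_u^{n+1}\|_{\bL^2}^2$. Your regrouping of the convective difference through intermediates and the viscous splitting $e_\eta^{n+1}\varepsilon(\bu(t_{n+1}))+\eta^{n+1}\varepsilon(\be_u^{n+1})+\eta^{n+1}\varepsilon(\delta\bu^{n+1})$ reproduce exactly the identities used in the paper, so the two arguments coincide up to the order of the algebra.
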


\begin{proof} 
Using equation \eqref{eq:mass_equation}, one can show that equation \eqref{eq:momentum_equation} can be rewritten as follows:
\begin{equation}
\label{eq:5.5}
\rho \partial_{t} \bu
+\rho(\bu \cdot \nabla)\bu
-2 \nabla \cdot (\eta \varepsilon(\bu))
+\nabla p 
=\bf{f}.
\end{equation}
By taking the difference between equation \eqref{eq:5.5} at time $t_{n+1}$ and equation \eqref{eq:approximation_velocity}, we obtain:
\begin{multline}\notag
\rho^n \frac{\delta \be_u^{n+1}}{\tau}
+ R_\bu^{n+1}
+\rho(t_{n+1})(\bu(t_{n+1}) \cdot \nabla) \bu(t_{n+1})-2\nabla\cdot(\eta(t_{n+1}) \varepsilon(\bu(t_{n+1}))
+\nabla p(t_{n+1})
\\
+\rho^n(\be_u^n \cdot \nabla) \bu^{n+1} 
-\rho^n(\bu(t_n) \cdot \nabla)\bu^{n+1}
+2 \Bar{\nu} \nabla \cdot(\varepsilon(\rho^{n+1} \delta \bu^{n+1}))
+2\nabla \cdot (\eta^{n+1} \varepsilon(\bu^n))
+\Bar{\lambda}\nabla (\nabla \cdot (\rho^{n+1}\delta \bu^{n+1}))
\\
+\lambda \nabla(\nabla \cdot \bu^n)-\nabla p^n-\frac{1}{2}\rho^n(\nabla \cdot \bu^n)\bu^{n+1}-\frac{1}{2} R_\rho^{n+1}\bu^{n+1}=0.
\end{multline}

Multiplying the above equation by $2\tau \be_u^{n+1}$ and integrating over the domain $\Omega$, we get:
\begin{multline}\label{eq:error_eq_u_step_1}
\|\sqrt{\rho^n}\be_u^{n+1}\|_{\bL^2}^2
-\|\sqrt{\rho^n}\be_u^n\|_{\bL^2}^2
+\|\sqrt{\rho^n}\delta \be_u^{n+1}\|_{\bL^2}^2
+ 2\tau \inner{R_\bu^{n+1}}{\be_u^{n+1}}
+ 2\tau \inner{\rho(t_{n+1})(\bu(t_{n+1})\cdot \nabla)\bu(t_{n+1})}{\be_u^{n+1}}
\\
+2\tau \inner{\rho^n(\be_u^n \cdot \nabla) \bu^{n+1}} {\be_u^{n+1}} 
-2\tau \inner{\rho^n (\bu(t_n) \cdot \nabla) \bu^{n+1}} {\be_u^{n+1}} 
-4\tau \inner{\nabla \cdot (\eta(t_{n+1}) \varepsilon(\bu(t_{n+1}))}{\be_u^{n+1}}
\\
+4\tau \inner{\Bar {\nu} \nabla \cdot(\varepsilon(\rho^{n+1}\delta \bu^{n+1}))}{\be_u^{n+1}}
+4\tau \inner{\nabla \cdot (\eta^{n+1}\varepsilon(\bu^n)}{\be_u^{n+1}}
+2\tau\Bar{\lambda}\inner{\nabla(\nabla \cdot (\rho^{n+1} \delta \bu^{n+1}))}{\be_u^{n+1}}
\\
+2\tau \inner{\nabla p(t_{n+1})}{\be_u^{n+1}} 
+2\tau\lambda\inner{\nabla(\nabla \cdot \bu^n)}{\be_u^{n+1}}
-2\tau\inner{\nabla p^n}{\be_u^{n+1}}-\tau\inner{\rho^n(\nabla \cdot \bu^n)\bu^{n+1}}{\be_u^{n+1}}
-\tau\inner{R_\rho^{n+1}\bu^{n+1}}{\be_u^{n+1}}=0.
\end{multline}
Multiplying \eqref{eq:algo_step_rho} by $\tau |\be_u^{n+1}|^2$ and integrating over $\Omega$ reads:
\begin{equation}\notag
\inner{\frac{\delta \rho^{n+1}}{\tau}}{\tau |\be_u^{n+1}|^2}
+ \inner{\bu^n\cdot\nabla\rho^n}{\tau |\be_u^{n+1}|^2}
=\tau\inner{R_\rho^{n+1}}{ |\be_u^{n+1}|^2}.
\end{equation}
It follows that:
\begin{equation}\notag
\|\sqrt{\rho^n}\be_u^{n+1}\|_{\bL^2}^2
=
\|\sqrt{\rho^{n+1}} \be_u^{n+1}\|_{\bL^2}^2
+\tau\inner{(\bu^n \cdot \nabla\rho^n) \be_u^{n+1}}{\be_u^{n+1}}
-\tau\inner{R_\rho^{n+1}}{ |\be_u^{n+1}|^2}.
\end{equation}
Substituting the above relation into \eqref{eq:error_eq_u_step_1} and integrating by parts the terms multiplied by the viscosity $\eta$ or $\bar{\nu}$, we obtain:
\begin{multline} \label{eq:error_eq_u_step_2}
\|\sqrt{\rho^{n+1}} \be_u^{n+1}\|_{\bL^2}^2-\|\sqrt{\rho^n}\be_u^n\|_{\bL^2}^2
+\|\sqrt{\rho^n}\delta \be_u^{n+1}\|_{\bL^2}^2
\\
=
- 2\tau \inner{R_\bu^{n+1}}{\be_u^{n+1}}
- 2\tau \inner{\rho(t_{n+1})(\bu(t_{n+1})\cdot \nabla)\bu(t_{n+1})}{\be_u^{n+1}}
-\tau\inner{(\bu^n \cdot \nabla\rho^n) \be_u^{n+1}}{\be_u^{n+1}}
\\
-2\tau \inner{\rho^n (\be_u^n \cdot\nabla) \bu^{n+1}} {e_u^{n+1}} 
+2\tau \inner{\rho^n (\bu(t_n) \cdot\nabla) \bu^{n+1}} {\be_u^{n+1}} 
-4\tau \inner{\eta(t_{n+1}) \varepsilon(\bu(t_{n+1})}{\varepsilon(\be_u^{n+1})}
\\
+4\tau \inner{\Bar {\nu} \varepsilon(\rho^{n+1}\delta \bu^{n+1})}{\varepsilon(\be_u^{n+1})}
+4\tau \inner{\eta^{n+1}\varepsilon(\bu^n)}{\varepsilon(\be_u^{n+1})}
-2\tau\Bar{\lambda} \inner{\nabla(\nabla \cdot (\rho^{n+1} \delta \bu^{n+1}))}{\be_u^{n+1}}
\\
-2\tau \inner{\nabla p(t_{n+1})}{\be_u^{n+1}} 
-2\tau\lambda\inner{\nabla(\nabla\cdot\bu^n)}{\be_u^{n+1}}
+2\tau\inner{\nabla p^n}{\be_u^{n+1}}
+\tau \inner{\rho^n(\nabla \cdot \bu^n) \bu^{n+1}}{\be_u^{n+1}}
+\tau\inner{R_\rho^{n+1}\bu(t_{n+1})}{\be_u^{n+1}}
.
\end{multline}
We conclude by observing that the following identities hold:
\begin{equation}\notag
-4\tau \inner{\eta^{n+1}\varepsilon(\bu^n)}{\varepsilon(\be_u^{n+1})}
=-4\tau \inner{\eta^{n+1}\varepsilon(\bu(t_{n+1}))}{\varepsilon(\be_u^{n+1})}
+4\tau \inner{\eta^{n+1}\varepsilon(\be_u^{n+1})}{\varepsilon(\be_u^{n+1})}
+4\tau \inner{\eta^{n+1}\varepsilon(\delta \bu^{n+1})}{\varepsilon(\be_u^{n+1})},
\end{equation}
\begin{equation}\notag
4\tau \inner{\eta(t_{n+1})\varepsilon(\bu(t_{n+1}))}{\varepsilon(\be_u^{n+1})}
-4\tau \inner{\eta^{n+1}\varepsilon(\bu(t_{n+1}))}{\varepsilon(\be_u^{n+1})}
=4\tau \inner{e_\eta^{n+1}\varepsilon(\bu(t_{n+1}))}{\varepsilon(\be_u^{n+1})},
\end{equation}
\begin{equation}\notag
-2\tau \inner{\rho(t_{n+1})(\bu(t_{n+1})\cdot \nabla)\bu(t_{n+1})}{\be_u^{n+1}}=
-2\tau \inner{e_\rho^{n+1}(\bu(t_{n+1})\cdot \nabla)\bu(t_{n+1})}{\be_u^{n+1}}
-2\tau \inner{\rho^{n+1}(\bu(t_{n+1})\cdot \nabla)\bu(t_{n+1})}{\be_u^{n+1}},
\end{equation}
and
\begin{multline}\notag
-\tau\inner{(\bu^n \cdot \nabla\rho^n) \be_u^{n+1}}{\be_u^{n+1}}
-2\tau \inner{\rho^n (\be_u^n \cdot \nabla) \bu^{n+1}} {\be_u^{n+1}} 
+2\tau \inner{\rho^n (\bu(t_n) \cdot \nabla) \bu^{n+1}} {\be_u^{n+1}} 
\\
=
-2\tau \inner{\rho^n (\be_u^n \cdot \nabla) \bu(t_{n+1})} {\be_u^{n+1}}
+\tau\inner{\rho^n(\nabla \cdot \bu^n)
\be_u^{n+1}}{\be_u^{n+1}}
+2\tau \inner{\rho^n (\bu(t_n) \cdot \nabla) \bu(t_{n+1})} {\be_u^{n+1}}.
\end{multline}
Indeed, combining the above relations with \eqref{eq:error_eq_u_step_2} yields \eqref{eq:lemma_error_velocity}.
\end{proof}

We conclude this section with the derivation of an equation for the pressure error that will later be combined with \eqref{eq:lemma_error_velocity} when studying the convergence of the algorithm \ref{eq:algo_step_mom}-\eqref{eq:algo_step_vel}.
\begin{lemma}
\label{lemma_pre_error_eq} 
Under the assumptions of lemma \ref{lemma_vel_error_eq}, the following holds for all integers $0\leq n \leq N$:
\begin{multline}
\label{eq:lemma_error_pressure}
-2\tau\inner{\nabla p(t_{n+1})}{\be_u^{n+1}}
-2\tau\inner{\lambda\nabla(\nabla \cdot \bu^n)}{\be_u^{n+1}}
+2\tau\inner{\nabla p^n}{\be_u^{n+1}}
\\
=
-\frac{\tau}{\lambda}\|e_p^{n+1}\|_{L^2}^2
+\frac{\tau}{\lambda}\|e_p^n\|_{L^2}^2
-\frac{\tau}{\lambda}\|e_p^{n+1}-e_p^n\|_{L^2}^2
+\frac{2\tau}{\lambda}\inner{e_p^{n+1}}{p(t_{n+1})-p(t_n)}
-2\tau\inner{\lambda\nabla(\nabla \cdot \delta \be_u^{n+1})}{\be_u^{n+1}}.
\end{multline}
\end{lemma}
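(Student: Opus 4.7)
The plan is to reduce the left-hand side to expressions involving only the discrete pressure and the pressure error by exploiting three structural ingredients: the continuous incompressibility constraint $\nabla\cdot\bu(t_n)=0$, the pressure update rule \eqref{eq:algo_step_pre}, and the polarization identity recalled in Section~\ref{sec:notations_preliminaries}. Throughout I would perform integrations by parts without boundary contributions, since $\be_u^{n+1}$ inherits the homogeneous Dirichlet boundary condition.

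First I would rewrite $-2\tau\inner{\lambda\nabla(\nabla\cdot\bu^n)}{\be_u^{n+1}}$ using $\nabla\cdot\bu^n=-\nabla\cdot\be_u^n$, which follows from $\nabla\cdot\bu(t_n)=0$. Splitting $\be_u^n=\be_u^{n+1}-\delta\be_u^{n+1}$ directly produces the contribution $-2\tau\inner{\lambda\nabla(\nabla\cdot\delta\be_u^{n+1})}{\be_u^{n+1}}$ that already appears on the right-hand side, and leaves behind an extra term $2\tau\inner{\lambda\nabla(\nabla\cdot\be_u^{n+1})}{\be_u^{n+1}}$ to combine with the two pressure-gradient contributions.

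Next I would treat the remaining combination $-2\tau\inner{\nabla p(t_{n+1})}{\be_u^{n+1}}+2\tau\inner{\nabla p^n}{\be_u^{n+1}}+2\tau\inner{\lambda\nabla(\nabla\cdot\be_u^{n+1})}{\be_u^{n+1}}$. Integration by parts recasts it as $2\tau\inner{p(t_{n+1})-p^n}{\nabla\cdot\be_u^{n+1}}-2\tau\lambda\|\nabla\cdot\be_u^{n+1}\|_{L^2}^2$. From \eqref{eq:algo_step_pre} and $\nabla\cdot\bu(t_{n+1})=0$ one gets the key identity $\nabla\cdot\be_u^{n+1}=\delta p^{n+1}/\lambda$. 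Decomposing $p(t_{n+1})-p^n=e_p^{n+1}+\delta p^{n+1}$ and substituting, the two $\|\delta p^{n+1}\|_{L^2}^2$ contributions with opposite signs cancel, and the expression collapses to $\frac{2\tau}{\lambda}\inner{e_p^{n+1}}{\delta p^{n+1}}$.

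To conclude, I would write $\delta p^{n+1}=(p(t_{n+1})-p(t_n))-(e_p^{n+1}-e_p^n)$ and apply the polarization identity $2(a-b)a=a^2-b^2+(a-b)^2$ with $a=e_p^{n+1}$, $b=e_p^n$ to the resulting term $\frac{2\tau}{\lambda}\inner{e_p^{n+1}}{e_p^{n+1}-e_p^n}$. This produces, with the correct signs, the three squared-norm contributions on the right-hand side, while the remainder $\frac{2\tau}{\lambda}\inner{e_p^{n+1}}{p(t_{n+1})-p(t_n)}$ gives the consistency term. The argument is essentially bookkeeping; the most delicate point is verifying that the two $\|\delta p^{n+1}\|_{L^2}^2$ contributions of opposite signs --- one coming from integration by parts against $\nabla\delta p^{n+1}$, the other from the grad-div term $\nabla(\nabla\cdot\be_u^{n+1})$ --- cancel exactly thanks to the pressure update relation, as otherwise a stray divergence term would survive on the right-hand side.
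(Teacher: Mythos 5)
Your proof is correct and follows essentially the same route as the paper: both rest on the pressure update \eqref{eq:algo_step_pre}, the incompressibility of the exact velocity, integration by parts, and the polarization identity, and both pass through the same intermediate quantity $\frac{2\tau}{\lambda}\inner{e_p^{n+1}}{p^{n+1}-p^n}$. The only difference is bookkeeping order — the paper substitutes the gradient of the update relation first, whereas you integrate by parts first and let the two $\|\nabla\cdot\be_u^{n+1}\|_{L^2}^2$-type contributions cancel — which is a cosmetic rearrangement, not a different argument.
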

\begin{proof}
Taking the gradient of \eqref{eq:algo_step_pre} reads:
\begin{equation}\notag
\nabla (p^n-p^{n+1})=\lambda\nabla(\nabla\cdot \bu^{n+1}).
\end{equation}
Multiplying the above relation with $2\tau \be_u^{n+1}$, integrating over the domain $\Omega$ and adding the result to the last term of the fourth line and first two terms of the fifth line from \eqref{eq:lemma_error_velocity}
gives:
\begin{equation}\notag
-2\tau\inner{\nabla p(t_{n+1})}{\be_u^{n+1}}
-2\tau\inner{\lambda\nabla(\nabla \cdot \bu^n)}{\be_u^{n+1}}
+2\tau\inner{\nabla p^n}{\be_u^{n+1}}
=
-2\tau\inner{\nabla e_p^{n+1}}{\be_u^{n+1}}
-2\tau\inner{\lambda\nabla(\nabla \cdot \delta \be_u^{n+1})}{\be_u^{n+1}}.
\end{equation}
Observing that:
$$
\begin{array}{rcl}
-2\tau\inner{\nabla e_p^{n+1}}{\be_u^{n+1}}
&=&
2\tau\inner{ e_p^{n+1}}{\nabla\cdot \be_u^{n+1}}
\\  [1.2ex]
&=&
-2\tau\inner{ e_p^{n+1}}{\nabla\cdot \bu^{n+1}}
\\ [1.2ex]
&=&
\frac{2\tau}{\lambda}\inner{e_p^{n+1}}{p^{n+1}-p^n}
\\ [1.2ex]
&=&
-\frac{2\tau}{\lambda}\inner{e_p^{n+1}}{e_p^{n+1}-e_p^n}
+\frac{2\tau}{\lambda}\inner{e_p^{n+1}}{p(t_{n+1})-p(t_n)},
\end{array}
$$
we conclude by applying the polarizing identity.
\end{proof}

\subsection{Main results}
We now show the main result of this section, namely that the velocity approximation $\bu^\tau$ converges with order half in $l^\infty(0,T;\bL^2(\Omega))$ and in $l^2(0,T;\bH^1(\Omega))$ norms. We conclude this section with a remark that hints how to improve the velocity error bound to first order and to get a pressure error bound of order half in $l^\infty(0,T;H^1(\Omega))$.
In the remainder of this section, we denote by $C$ and $\epsilon_i$ positive constants that are independent of the time step $\tau$ and the approximations $(\rho^\tau, \bu^\tau, p^\tau)$.

\begin{theorem}
\label{theorem:error_bound}
Let $\gamma$ be a positive constant such that $\gamma^2\leq {\min\left(\frac{1}{4},\frac{1}{64 \Bar{\nu}}\right)}$. 
Assume that the solutions of \eqref{eq:mass_equation}-\eqref{eq:incompressibility_equation} satisfy the regularity hypothesis \eqref{hyp:regularity solutions}.
Moreover, assume that the approximation of the density $\rho^\tau$ is given by an algorithm of the form \eqref{eq:approximation_density} that satisfies for all $0\leq n \leq N$, the maximum principle \eqref{hyp:min_max_principle} and the following conditions:
\begin{equation}\label{eq:lemma3_hypo1}
(\tau\bar{\nu})^{1/2} \| \frac{\GRAD \rho^{n+1}}{\sqrt{\rho^n\eta^{n+1}}} \|_{\bL^\infty} 
 \leq \gamma, \qquad
\frac{(\tau\bar{\lambda})^{1/2} }{\underline{\rho}}\| \frac{\GRAD \rho^{n+1}}{\sqrt{\rho^n\eta^{n+1}}} \|_{\bL^\infty} 
 \leq \gamma,
\end{equation}
\begin{equation}\label{eq:lemma3_hypo2}
\|  \frac{\bar{\nu}\delta \rho^{n+1}}{\eta^{n+1}}\|_{L^\infty}
\leq 1, \qquad
\| \frac{\Bar{\lambda} \delta \rho^{n+1}}{\lambda} \|_{L^\infty} 
\leq 1.
\end{equation}
Then, the sequences $(\bu^\tau, p^\tau)$ generated by the algorithm \eqref{eq:algo_step_mom}-\eqref{eq:algo_step_pre} satisfy the following error estimates for all $0\leq n \leq N$:
\begin{equation}
\label{eq:main_results}
(1-C\tau) A^{n+1} 
+ \tau B^{n+1} 
\leq 
(1+C\tau) A^n 
+C\tau^2
+C\tau \left(\|e_\rho^{n+1}\|_{L^2}^2
+\|e_\rho^n\|_{L^2}^2
+\|R_\rho^{n+1}\|_{L^2}^2\right)
,
\end{equation}
where 
\begin{equation}\notag
A^n:=\|\sqrt{\rho^n} \be_u^n\|_{\bL^2}^2
+\frac{\tau}{\lambda}\|e_p^n\|_{L^2}^2
+2\tau\Bar{\nu}\|\sqrt{\rho^n} \varepsilon (\be_u^n)\|_{\bL^2}^2
+ \tau\Bar{\lambda}\|\sqrt{\rho^n} \DIV\be_u^n\|_{\bL^2}^2
,
\end{equation}
and
\begin{equation}\notag
B^n:=
\|\sqrt{\eta^n} \varepsilon (\be_u^n)\|_{\bL^2}^2,
\end{equation}
\end{theorem}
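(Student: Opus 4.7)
The plan is to combine the velocity error equation \eqref{eq:lemma_error_velocity} with the pressure identity \eqref{eq:lemma_error_pressure} to eliminate the pressure gradient from the right-hand side, then mimic the treatment of Theorem \ref{thm:stab_bdf1} for the correction terms involving $\rho^{n+1}\delta\bu^{n+1}$, and finally use the regularity assumption \eqref{hyp:regularity solutions} together with Cauchy--Schwarz and Young's inequalities on the remaining consistency terms. Substituting \eqref{eq:lemma_error_pressure} in \eqref{eq:lemma_error_velocity} replaces the three pressure terms by a telescoping piece $-\frac{\tau}{\lambda}\delta\|e_p^{n+1}\|_{L^2}^2$ that becomes part of $A^{n+1}-A^n$, a non-positive square $-\frac{\tau}{\lambda}\|e_p^{n+1}-e_p^n\|_{L^2}^2$ that is dropped, and a consistency term $\frac{2\tau}{\lambda}\inner{e_p^{n+1}}{p(t_{n+1})-p(t_n)}$ that Young and \eqref{hyp:regularity solutions} bound by $C\tau^2 + C\tau\|e_p^{n+1}\|_{L^2}^2/\lambda$.

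The viscous and artificial-compressibility correction terms involving $\rho^{n+1}\delta\bu^{n+1}$ are treated exactly as in Theorem \ref{thm:stab_bdf1}: integration by parts combined with the algebraic inequality $\varepsilon(\rho\bu):\varepsilon(\bv)\geq \rho\,\varepsilon(\bu):\varepsilon(\bv)-\|\bu\|_{l^2}\|\GRAD\rho\|_{l^\infty}\|\varepsilon(\bv)\|_{l^2}$ produces the telescoping quantities $2\tau\bar\nu\delta\|\sqrt{\rho^{n+1}}\varepsilon(\be_u^{n+1})\|_{\bL^2}^2$ and $\tau\bar\lambda\delta\|\sqrt{\rho^{n+1}}\DIV\be_u^{n+1}\|_{L^2}^2$, coercive $\|\sqrt{\eta^{n+1}}\varepsilon(\delta\be_u^{n+1})\|^2$ and $\lambda\|\DIV\delta\be_u^{n+1}\|^2$ terms, and unwanted cross terms of the form $C\tau\|\GRAD\rho^{n+1}\|_{L^\infty}\|\delta\be_u^{n+1}\|\|\varepsilon(\be_u^{n+1})\|$ (and the analogue with $\DIV$). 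The strengthened hypotheses \eqref{eq:lemma3_hypo1} allow these cross terms to be absorbed into a fraction of $\|\sqrt{\rho^n}\delta\be_u^{n+1}\|^2$, which already sits on the LHS of \eqref{eq:lemma_error_velocity}, and a small fraction of $B^{n+1}$; \eqref{eq:lemma3_hypo2} then converts $\|\sqrt{\bar\nu\rho^{n+1}}\varepsilon(\be_u^n)\|^2$ into $\|\sqrt{\bar\nu\rho^n}\varepsilon(\be_u^n)\|^2$ plus a controlled tail in $B^n$, with the analogous statement for the divergence. The bound \eqref{eq:prelim_div} is used whenever a divergence must be dominated by a strain rate.

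The remaining consistency terms are bounded via \eqref{hyp:regularity solutions}, \eqref{eq:approximation_error_density} and Young's inequality. The truncation $R_\bu^{n+1}$, decomposed as in \eqref{eq:lemma_error_velocity_def_Ru}, yields $C\tau^2+C\tau\|e_\rho^n\|_{L^2}^2+C\tau\|\be_u^{n+1}\|_{\bL^2}^2$. The mismatch $\rho^n\bu(t_n)-\rho^{n+1}\bu(t_{n+1})$ is split as $\rho^n(\bu(t_n)-\bu(t_{n+1}))+(\rho^n-\rho^{n+1})\bu(t_{n+1})$ and gives $C\tau^2+C\tau\|e_\rho^{n+1}\|_{L^2}^2+C\tau\|\be_u^{n+1}\|_{\bL^2}^2$. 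The transport term $-2\tau\inner{\rho^n(\be_u^n\cdot\nabla)\bu(t_{n+1})}{\be_u^{n+1}}$ contributes $C\tau(\|\be_u^n\|_{\bL^2}^2+\|\be_u^{n+1}\|_{\bL^2}^2)$; the viscosity-error term $-4\tau\inner{e_\eta^{n+1}\varepsilon(\bu(t_{n+1}))}{\varepsilon(\be_u^{n+1})}$ is bounded by $C\tau\|e_\rho^{n+1}\|_{L^2}^2+\epsilon B^{n+1}$ via $|e_\eta|\leq\alpha|e_\rho|$; the density residual gives $C\tau\|R_\rho^{n+1}\|_{L^2}^2+C\tau\|\be_u^{n+1}\|_{\bL^2}^2$; and the term $\tau\inner{\rho^n(\DIV\bu^n)\bu(t_{n+1})}{\be_u^{n+1}}$ is processed using $\DIV\bu(t_n)=0$, so that $\DIV\bu^n=-\DIV\be_u^n$. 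Collecting everything, using $\|\be_u^{n+1}\|_{\bL^2}^2\leq\rho_{\min}^{-1}\|\sqrt{\rho^{n+1}}\be_u^{n+1}\|_{\bL^2}^2$, and rearranging the Young absorptions of $C\tau\|\be_u^{n+1}\|_{\bL^2}^2$ into $A^{n+1}$ produces the factor $(1-C\tau)$ in front of $A^{n+1}$ in \eqref{eq:main_results}.

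The main obstacle is the simultaneous absorption of all gradient-of-density cross terms into $\|\sqrt{\rho^n}\delta\be_u^{n+1}\|^2$ and into a sufficiently small fraction of $B^{n+1}$. The LHS of \eqref{eq:lemma_error_velocity} carries the coefficient $4\tau$ on $B^{n+1}$, whereas \eqref{eq:main_results} only retains $\tau B^{n+1}$; the extra $3\tau B^{n+1}$ forms the budget for absorbing the viscosity cross term, the viscosity-error term, and the converted contributions from $\|\sqrt{\bar\nu\rho^{n+1}}\varepsilon(\be_u^n)\|^2$. The refined constraint $\gamma^2\leq\min(1/4,1/(64\bar\nu))$ in the statement is precisely what is needed so that, after conversion between $\|\sqrt{\eta^{n+1}}\cdot\|$ and $\|\sqrt{\bar\nu\rho^{n+1}}\cdot\|$ (which introduces a factor $\bar\nu$), all fractions of $B^{n+1}$ spent on cross and viscosity-error terms sum to strictly less than this budget, so that $\tau B^{n+1}$ survives on the LHS of \eqref{eq:main_results}.
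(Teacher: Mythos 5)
Your plan follows the paper's route at the top level (inject \eqref{eq:lemma_error_pressure} into \eqref{eq:lemma_error_velocity}, recycle the Theorem \ref{thm:stab_bdf1} treatment of the $\rho^{n+1}\delta\bu^{n+1}$ corrections, and dispatch the consistency terms with \eqref{hyp:regularity solutions} and Young), but it has one concrete gap: you declare that the square $\frac{\tau}{\lambda}\|e_p^{n+1}-e_p^n\|_{L^2}^2$ ``is dropped.'' In the paper this term is emphatically not discarded. Because of \eqref{eq:algo_step_pre} and $\DIV\bu(t_{n+1})=0$, one has $e_p^{n+1}-e_p^n=(p(t_{n+1})-p(t_n))-\lambda\DIV\be_u^{n+1}$, so this square equals $\tau\lambda\|\DIV\be_u^{n+1}\|_{L^2}^2$ up to terms of size $C\tau^2$ (the paper's $E_1+E_2$ computation), and that contribution is exactly what cancels the term $A_8=-2\tau\lambda\inner{\nabla(\nabla\cdot\delta\be_u^{n+1})}{\be_u^{n+1}}$ imported from \eqref{eq:lemma_error_pressure} into \eqref{eq:main_results_proof1_1}, whose polarization produces $+\tau\lambda\|\DIV\be_u^{n+1}\|_{L^2}^2$ on the right-hand side. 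Your proposal never accounts for $A_8$ at all: it is not one of the $\rho^{n+1}\delta\bu^{n+1}$ correction terms and it does not appear in your list of consistency terms, yet it is unavoidable once \eqref{eq:lemma_error_pressure} is substituted. This mirrors the stability proof, where the identity $2\tau\int_\Omega\GRAD p^n\cdot\bu^{n+1}=\frac{\tau}{\lambda}\delta\|p^{n+1}\|_{L^2}^2-\tau\lambda\|\DIV\bu^{n+1}\|_{L^2}^2$ supplies the same cancellation; in the error analysis the only place that $-\tau\lambda\|\DIV\be_u^{n+1}\|^2$ can come from is the pressure-increment square you throw away.

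Without it the term $+\tau\lambda\|\DIV\be_u^{n+1}\|_{L^2}^2$ cannot be handled under the stated hypotheses. Absorbing it into the dissipation would require $\lambda c_3^2/\eta_{\min}$ to be smaller than the available slack in front of $\|\sqrt{\eta^{n+1}}\varepsilon(\be_u^{n+1})\|_{\bL^2}^2$, which is not assumed (take $\lambda=1$ and $\eta_{\min}$ small); absorbing it into the $\tau\bar\lambda\|\sqrt{\rho^{n+1}}\DIV\be_u^{n+1}\|_{L^2}^2$ part of $A^{n+1}$ would leave only a fixed fraction of that term on the left, while the matching term $\tau\bar\lambda\|\sqrt{\rho^{n}}\DIV\be_u^{n}\|_{L^2}^2$ enters the right-hand side with full weight through the telescoping in \eqref{eq:error_bound_limit_A7}, destroying the $(1-C\tau)A^{n+1}\leq(1+C\tau)A^n+\dots$ structure needed for the Gr\"onwall step of Corollary \ref{corollary_order_cvg}. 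A secondary bookkeeping remark: your ``budget of $3\tau B^{n+1}$'' is optimistic, since $2\tau\|\sqrt{\eta^{n+1}}\varepsilon(\be_u^{n+1})\|_{\bL^2}^2$ of the $4\tau$ on the left is consumed structurally by the explicit viscous term (the paper's $B_3$), leaving only roughly one unit of $\tau B^{n+1}$ for all Young absorptions — which is why the paper fixes the $\epsilon_i$ as the specific small fractions listed at the end of its proof. The fix to your argument is simply to keep the pressure-increment square and reproduce the paper's $E_1+E_2$ step before choosing the $\epsilon_i$.
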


\begin{proof}
Using lemmas \ref{lemma_vel_error_eq}-\ref{lemma_pre_error_eq}, we inject \eqref{eq:lemma_error_pressure} into \eqref{eq:lemma_error_velocity} to obtain:
\begin{multline}
\label{eq:main_results_proof1_1}
\|\sqrt{\rho^{n+1}} \be_u^{n+1}\|_{\bL^2}^2
-\|\sqrt{\rho^n}\be_u^n\|_{\bL^2}^2
+\|\sqrt{\rho^n}\delta \be_u^{n+1}\|_{\bL^2}^2+
\frac{\tau}{\lambda}\|{e_p^{n+1}}\|_{L^2}^2-\frac{\tau}{\lambda}\|e_p^n\|_{L^2}^2+\frac{\tau}{\lambda}\|e_p^{n+1}-{e_p^n}\|_{L^2}^2
+4\tau\|\sqrt{\eta^{n+1}}\varepsilon(\be_u^{n+1})\|_{\bL^2}^2
\\
= -2\tau \inner{R_\bu^{n+1}}{\be_u^{n+1}}
-2\tau \inner{ e_\rho^{n+1}(u(t_{n+1})\cdot \nabla)\bu(t_{n+1})}{\be_u^{n+1}}
+2\tau \inner{((\rho^n\bu(t_n)-\rho^{n+1}\bu(t_{n+1}))\cdot \nabla)\bu(t_{n+1})}{\be_u^{n+1}}
\\
-2\tau \inner{\rho^n(\be_u^n \cdot \nabla) \bu(t_{n+1})} {\be_u^{n+1}}
-4\tau \inner{(e_\eta^{n+1}\varepsilon(\bu(t_{n+1}))}{\varepsilon(\be_u^{n+1})}
+4\tau \inner{\Bar {\nu}(\varepsilon(\rho^{n+1} \delta \bu^{n+1}))}{\varepsilon(\be_u^{n+1})}
\\
-4\tau \inner{ (\eta^{n+1}\varepsilon(\delta \bu^{n+1})}{\varepsilon(\be_u^{n+1})}
-2\tau\Bar{\lambda} \inner{\nabla(\nabla \cdot (\rho^{n+1} \delta \bu^{n+1}))}{\be_u^{n+1}}
-2\tau\inner{\lambda\nabla((\nabla \cdot \delta \be_u^{n+1})}{\be_u^{n+1}}
\\
+\frac{2\tau}{\lambda}\inner{e_p^{n+1}}{p(t_{n+1})-p(t_n)}+\tau\inner{\rho^n(\nabla\cdot \bu^n)\bu(t_{n+1})}{e_u^{n+1}}
+\tau\inner{R_\rho^{n+1}\bu(t_{n+1})}{\be_u^{n+1}}=\sum_{i=1}^{11}A_i.
\end{multline}
We now establish bounds for each term $A_i$, whose definition is written below for completeness. We note that, in the following, we may use Korn inequality \eqref{eq:prelim_Korn}, Poincar\'e inequality \eqref{eq:prelim_Poincare}, Young inequalities and polarization identity without mentioning them explicitly.
\begin{equation}\notag
\begin{array}{cclccl}
A_1
& := &  
-2\tau \inner{R_\bu^{n+1}}{\be_u^{n+1}},
& 
A_7
& := &
-2\tau\Bar{\lambda}\inner{\nabla(\nabla \cdot (\rho^{n+1} \delta \bu^{n+1}))}{\be_u^{n+1}},
\\ [1.2ex]
A_2
& := & 
-2\tau \inner{ e_\rho^{n+1}(\bu(t_{n+1})\cdot \nabla)\bu(t_{n+1})}{\be_u^{n+1}},
& 
A_8
& := &
-2\tau\inner{\lambda\nabla(\nabla \cdot \delta \be_u^{n+1})}{\be_u^{n+1}},
\\ [1.2ex]
A_3
& := & 
2\tau \inner{((\rho^n\bu(t_n)-\rho^{n+1}\bu(t_{n+1}))\cdot \nabla)\bu(t_{n+1})}{\be_u^{n+1}},
& 
A_9
& := &
\frac{2\tau}{\lambda}\inner{e_p^{n+1}}{p(t_{n+1})-p(t_n)},
\\ [1.2ex]
A_4
& := & 
-2\tau \inner{\rho^n (\be_u^n \cdot \nabla) \bu(t_{n+1})} {\be_u^{n+1}},
& 
A_{10}
& := &
\tau\inner{\rho^n(\nabla\cdot \bu^n)\bu(t_{n+1})}{\be_u^{n+1}},
\\ [1.2ex]
A_5
& := & 
-4\tau \inner{(e_\eta^{n+1}\varepsilon(\bu(t_{n+1})}{\varepsilon(\be_u^{n+1})},
& 
A_{11}
& := &
\tau\inner{R_\rho^{n+1} \bu(t_{n+1})}{\be_u^{n+1}}.
\\ [1.2ex]
A_6
& := & 
4\tau \inner{\Bar {\nu}(\varepsilon(\rho^{n+1} \delta \bu^{n+1}))}{\varepsilon(\be_u^{n+1})}
-4\tau \inner{ (\eta^{n+1}\varepsilon(\delta \bu^{n+1})}{\varepsilon(\be_u^{n+1})},
& 
& &
\\ [1.2ex]
\end{array}
\end{equation}
Observe that using \eqref{eq:lemma_error_velocity_def_Ru} and the regularity hypothesis \eqref{hyp:regularity solutions} gives:
$$\|R_\bu^{n+1}\|_{L^2}^2 \leq C(\tau^2 +\|e_\rho^n\|_{L^2}^2).$$
Therefore, applying Cauchy-Schwarz we bound the terms $A_1$ and $A_2$ as follows:
$$A_1
\leq 
C \tau^3 + C \tau (\|e_\rho^n\|_{L^2}^2 + \|e_\bu^n\|_{\bL^2}^2),$$
$$A_2 
\leq
C \tau ( \|e_\rho^n\|_{L^2}^2 + \|e_\bu^n\|_{\bL^2}^2 ) .
$$

We write the term $A_3$ as follows:
\begin{equation}\notag
A_3=2\tau \inner{((\rho^n(\bu(t_n)-\bu(t_{n+1}))\cdot \nabla)\bu(t_{n+1})}{\be_u^{n+1}}-2\tau \inner{((\rho^{n+1}-\rho^n)\bu(t_{n+1})\cdot \nabla)\bu(t_{n+1})}{\be_u^{n+1}}:=I_1+I_2.
\end{equation}
Using the regularity hypothesis \eqref{hyp:regularity solutions}, we get
\begin{equation}\notag
I_1\leq C\tau^3+C\tau\|\be_u^{n+1}\|_{\bL^2}^2,
\end{equation}
and \begin{equation}\notag
\begin{array}{ccl}
I_2 
& = &
-2\tau \inner{(e_\rho^n \bu(t_{n+1})\cdot\nabla)\bu(t_{n+1}))}{\be_u^{n+1}}
+ 2\tau \inner{(e_\rho^{n+1} \bu(t_{n+1})\cdot\nabla)\bu(t_{n+1}))}{\be_u^{n+1}}
\\ [1.2ex]
& &
-2\tau \inner{(\rho(t_{n+1})-\rho(t_n))\bu(t_{n+1})\cdot\nabla)\bu(t_{n+1}))}{\be_u^{n+1}} 
\\ [1.2ex]
& \leq &
C\tau \left(
\|e_\rho^n\|_{L^2}^2
+ \|e_\rho^{n+1}\|_{L^2}^2 
+\|\be_u^{n+1}\|_{\bL^2}^2
\right)
+C\tau^3.
\end{array}
\end{equation}

Similarly, using the regularity hypothesis \eqref{hyp:regularity solutions} the Lipschitz condition on the dynamical viscosity, see \eqref{eq:approximation_error_density}, we get
$$
A_4
\leq
C \tau (\|\be_u^{n}\|_{\bL^2}^2 + \|\be_u^{n+1}\|_{\bL^2}^2)
$$
$$
A_5
\leq
C\tau\|e_\rho^{n+1}\|_{L^2}^2
+
\tau\epsilon_1\|\varepsilon(\be_u^{n+1})\|_{\bL^2}^2,
$$
where we use Young's inequality to get the existence of a positive constant $\epsilon_1$ that is independent of the time step and approximation.
Hence, so far, we have
\begin{equation}\notag
\sum_{i=1}^{5}A_i
\leq
C\tau \left(\|\be_u^{n+1}\|_{\bL^2}^2+\|e_\rho^{n+1}\|_{L^2}^2
+\|\be_u^n\|_{\bL^2}^2
+\|e_\rho^n\|_{L^2}^2
\right)
+\tau\epsilon_1\|\varepsilon(\be_u^{n+1})\|_{\bL^2}^2
+C\tau^3.
\end{equation}

We now focus our attention on the term $A_6$. First, we rewrite $A_6$ as follows:
\begin{equation}\notag
\begin{array}{crl}
A_6
& = &
4\tau \Bar {\nu}\inner{\rho^{n+1}\varepsilon( \delta \bu^{n+1}))}{\varepsilon(\be_u^{n+1})}
\\ [1.2ex]
& &
+2\tau \Bar {\nu}\inner{\delta \bu^{n+1}\otimes \nabla \rho^{n+1}+\nabla \rho^{n+1}\otimes \delta \bu^{n+1}}{\varepsilon(\be_u^{n+1})}
\\ [1.2ex]
& &
-4\tau \inner{ (\eta^{n+1}\varepsilon(\delta \bu^{n+1})}{\varepsilon(\be_u^{n+1})}
\\ [1.2ex]
& = & 
B_1 + B_2 + B_3.
\end{array}
\end{equation}
Notice that
\begin{equation}\notag
\begin{array}{ccl}
B_1
& = & 
-4\tau \Bar {\nu}\inner{\rho^{n+1}\varepsilon( \delta \be_u^{n+1}))}{\varepsilon(\be_u^{n+1})}
+4\tau \Bar {\nu}\inner{\rho^{n+1}\varepsilon(\bu(t_{n+1})-\bu(t_n))}{\varepsilon(\be_u^{n+1})}
\\
& = &
-2\tau\Bar{\nu}\|\sqrt{\rho^{n+1}}\varepsilon(\be_u^{n+1})\|_{\bL^2}^2
+2\tau\Bar{\nu}\|\sqrt{\rho^{n+1}}\varepsilon(\be_u^n)\|_{\bL^2}^2
-2\tau\Bar{\nu}\|\sqrt{\rho^{n+1}}\varepsilon(\delta \be_u^{n+1})\|_{\bL^2}^2
\\
& &
+4\tau \Bar {\nu}\inner{\rho^{n+1}\varepsilon(\bu(t_{n+1})-\bu(t_n))}{\varepsilon(\be_u^{n+1})}.
\end{array}
\end{equation}
Using \eqref{eq:lemma3_hypo2}, we get
\begin{equation}\notag
\Bar{\nu}\|\sqrt{\rho^{n+1}}\varepsilon(\be_u^n)\|_{\bL^2}^2
\leq \Bar{\nu}\|\sqrt{\rho^n}
\varepsilon(\be_u^n)\|_{\bL^2}^2+\|\sqrt{\eta^{n+1}}\varepsilon(\be_u^n)\|_{\bL^2}^2.
\end{equation}
Therefore, using \eqref{hyp:regularity solutions} and Young inequality, we bound $B_2$ as follows:
\begin{equation}\notag
B_1\leq -2\tau\Bar{\nu}\|\sqrt{\rho^{n+1}}\varepsilon(\be_u^{n+1})\|_{\bL^2}^2
+2\tau\Bar{\nu}\|\sqrt{\rho^n}\varepsilon(\be_u^n)\|_{\bL^2}^2
+2\tau\|\sqrt{\eta^{n+1}}\varepsilon(\be_u^n)\|_{\bL^2}^2
\end{equation}
\begin{equation}\notag
-2\tau\Bar{\nu}\|\sqrt{\rho^{n+1}}\varepsilon(\delta \be_u^{n+1})\|_{\bL^2}^2
+\tau\epsilon_2\|\varepsilon(\be_u^{n+1})\|_{\bL^2}^2
+C\tau^3,
\end{equation}
where $\epsilon_2$ is a positive constant.
Similarly, we bound the term $B_2$ as follows:
\begin{equation}
\label{eq:error_bound_limit_B2}
\begin{array}{ccl}
B_2
& \leq &
4\tau \Bar{\nu} \|\sqrt{\rho^n} \delta \be_u^{n+1}\|_{\bL^2}
\|\frac{1}{\sqrt{\rho^n\eta^{n+1}}} \nabla\rho^{n+1}\|_{\bL^\infty}^2
 \|\sqrt{\eta^{n+1}}\varepsilon(\be_u^{n+1})\|_{\bL^2}
\\ [1.2ex]
& &
+4\tau \Bar{\nu} \|\sqrt{\rho^n} (\bu(t_{n+1})-\bu(t_n))\|_{\bL^2}
\|\frac{1}{\sqrt{\rho^n\eta^{n+1}}} \nabla\rho^{n+1}\|_{\bL^\infty}^2
 \|\sqrt{\eta^{n+1}}\varepsilon(\be_u^{n+1})\|_{\bL^2}
\\ [1.2ex]
& \leq &
\frac{2\gamma^2}{\epsilon_3} \|\sqrt{\rho^n} \delta \be_u^{n+1}\|_{\bL^2}^2
+2(\epsilon_3+\epsilon_4)\Bar{\nu}\tau
 \|\sqrt{\eta^{n+1}}\varepsilon(\be_u^{n+1})\|_{\bL^2}^2
 +C\tau^2,
\end{array}
\end{equation}
where we use hypothesis \eqref{eq:lemma3_hypo1} to get the last inequality.
Using regularity hypothesis \eqref{hyp:regularity solutions}, maximum principle \eqref{hyp:min_max_principle_eta}, and observing that
\begin{equation}\notag
B_3=4\tau \inner{ (\eta^{n+1}\varepsilon(\delta \be_u^{n+1})}{\varepsilon(\be_u^{n+1})}-4\tau \inner{ (\eta^{n+1}\varepsilon(\bu(t_{n+1})-\bu(t_n))}{\varepsilon(\be_u^{n+1})},
\end{equation}
we obtain:
\begin{equation}\notag
B_3 \leq 2\tau\|\sqrt{\eta^{n+1}}\varepsilon(\be_u^{n+1})\|_{\bL^2}^2
-2\tau\|\sqrt{\eta^{n+1}}\varepsilon(\be_u^n))\|_{\bL^2}^2
+2\tau\|\sqrt{\eta^{n+1}}\varepsilon(\delta \be_u^{n+1})\|_{\bL^2}^2
+\tau\epsilon_5\|\varepsilon(\be_u^{n+1})\|_{\bL^2}^2
+C\tau^3.
\end{equation}
Hence, we get:
\begin{equation}\notag
\begin{array}{ccl}
A_6 = B_1 + B_2 + B_3
& \leq &  
\frac{2\gamma^2}{\epsilon_3}\|\sqrt{\rho^n}\delta \be_u^{n+1}\|_{\bL^2}^2
+\tau(2+2\Bar{\nu}\epsilon_3+2\Bar{\nu}\epsilon_4)\|\sqrt{\eta^{n+1}}\varepsilon(\be_u^{n+1}))\|_{\bL^2}^2
+C\tau^2
\\ [1.2ex]
& & 
-2\tau\Bar{\nu}\|\sqrt{\rho^{n+1}}\varepsilon(\be_u^{n+1})\|_{\bL^2}^2
+2\tau\Bar{\nu}\|\sqrt{\rho^n}\varepsilon(\be_u^n)\|_{\bL^2}^2
+\tau(\epsilon_2+\epsilon_5)\|\varepsilon(\be_u^{n+1})\|_{\bL^2}^2.
\end{array}
\end{equation}

Then, we write the term $A_7$ as follows:
$$
\begin{array}{ccl}
A_7
& = &
2\tau\Bar{\lambda}\inner{(\nabla \cdot (\rho^{n+1} \delta \bu^{n+1})}{\nabla \cdot \be_u^{n+1}}
\\
& = &  
-2\tau\Bar{\lambda}\inner{(\nabla \cdot (\rho^{n+1} \delta \be_u^{n+1})}{\nabla \cdot \be_u^{n+1}}+2\tau \Bar{\lambda}\inner{(\nabla \cdot (\rho^{n+1} (\bu(t_{n+1})-\bu(t_n)))}{\nabla \cdot \be_u^{n+1}}
\\
& = &  
D_1+D_2.
\end{array}
$$
Noticing that
\begin{equation}\notag
D_1
=
-2\tau\Bar{\lambda}\inner{\delta \be_u^{n+1}\cdot \nabla\rho^{n+1})}{\nabla \cdot \be_u^{n+1}}
-2\tau\Bar{\lambda}\inner{\rho^{n+1}(\nabla \cdot \delta \be_u^{n+1})}{\nabla \cdot \be_u^{n+1}},
\end{equation}
using \eqref{eq:lemma3_hypo1}, Young inequality, and polarization identity, we obtain
$$
\begin{array}{ccl}
D_1
& \leq &  
\frac{\gamma^2}{\epsilon_6}\|\sqrt{\rho^n}\delta \be_u^{n+1}\|_{\bL^2}^2
+ \tau \epsilon_6 \|\sqrt{\eta^{n+1}}\nabla \cdot\be_u^{n+1}\|_{L^2}^2
-\tau\Bar{\lambda}\| \sqrt{\rho^{n+1}}\nabla\cdot \be_u^{n+1}\|_{L^2}^2
\\
& & 
+\tau\Bar{\lambda}\| \sqrt{\rho^{n+1}}\nabla\cdot \be_u^n\|_{L^2}^2
-\tau\Bar{\lambda}\| \sqrt{\rho^{n+1}}\nabla\cdot \delta \be_u^{n+1}\|_{L^2}^2.
\\ 
\end{array}
$$
\begin{equation}\notag
D_1\leq 
\frac{\gamma^2}{\epsilon_6}\|\sqrt{\rho^n}\delta \be_u^{n+1}\|_{\bL^2}^2+ \tau \epsilon_6 \|\sqrt{\eta^{n+1}}\nabla \cdot\be_u^{n+1}\|_{L^2}^2
-\tau\Bar{\lambda}\| \sqrt{\rho^{n+1}}\nabla\cdot \be_u^{n+1}\|_{L^2}^2
\end{equation}
\begin{equation}\notag
+\tau\Bar{\lambda}\| \sqrt{\rho^{n+1}}\nabla\cdot \be_u^n\|_{L^2}^2-\tau\Bar{\lambda}\| \sqrt{\rho^{n+1}}\nabla\cdot \delta \be_u^{n+1}\|_{L^2}^2.
\end{equation}
Using \eqref{eq:lemma3_hypo2}, we get
$$
\begin{array}{ccl}
\tau\Bar{\lambda}\| \sqrt{\rho^{n+1}}\nabla\cdot \be_u^n\|_{L^2}^2
& \leq &  
\tau\Bar{\lambda}\|\delta \rho^{n+1}\|_{L^\infty}\|\nabla\cdot \be_u^n\|_{L^2}^2+\tau\Bar{\lambda}\| \sqrt{\rho^n}\nabla\cdot \be_u^n\|_{L^2}^2
\\
& \leq &
\tau\lambda\|\nabla\cdot \be_u^n\|_{L^2}^2+\tau\Bar{\lambda}\| \sqrt{\rho^n}\nabla\cdot \be_u^n\|_{L^2}^2.
\end{array}
$$
Hence, similarly than the term $B_2$, we bound $A_7$ as follows:
\begin{equation}
\label{eq:error_bound_limit_A7}
\begin{array}{ccl}
A_7 = D_ 1 + D_2
& \leq &  
\frac{ \gamma^2}{\epsilon_6}\|\sqrt{\rho^n}\delta \be_u^{n+1}\|_{\bL^2}^2
+\tau \epsilon_6 \|\sqrt{\eta^{n+1}}\nabla \cdot\be_u^{n+1}\|_{L^2}^2
-\tau\Bar{\lambda}\| \sqrt{\rho^{n+1}}\nabla\cdot \be_u^{n+1}\|_{L^2}^2
+\tau\lambda\|\nabla\cdot \be_u^n\|_{L^2}^2
\\
& &  
+\tau\Bar{\lambda}\| \sqrt{\rho^n}\nabla\cdot \be_u^n\|_{L^2}^2
-\tau\Bar{\lambda}\| \sqrt{\rho^{n+1}}\nabla\cdot \delta \be_u^{n+1}\|_{L^2}^2
+\tau\epsilon_7\| \nabla\cdot \be_u^{n+1}\|_{L^2}^2
+C\tau^2,
\end{array}
\end{equation}
where we use the hypothesis \eqref{hyp:regularity solutions}-\eqref{eq:lemma3_hypo1} to bound the term 
$|\nabla \cdot (\rho^{n+1} (\bu(t_{n+1})-\bu(t_n)))|$ 
in $D_2$ with $\tau^2$.
The remaining terms $A_i$, for $i=8,9,10,11$, are bounded using the polarization identity, the Young inequality, and hypothesis \eqref{hyp:min_max_principle}-\eqref{hyp:regularity solutions}. 
It reads:
\begin{equation}
\label{eq:error_bound_limit_A9}
\begin{array}{ccl}
A_8
&=&
\tau \lambda\|\nabla\cdot \be_u^{n+1}\|_{L^2}^2
-\tau \lambda\|\nabla\cdot \be_u^n\|_{L^2}^2
+\tau \lambda\|\nabla\cdot \delta \be_u^{n+1}\|_{L^2}^2,
\\
A_9
& \leq &  
C\frac{\tau^2}{\lambda}\|e_p^{n+1}\|_{L^2}^2+C\tau^2,
\\ [1.2ex]
A_{10}
& \leq &  
\tau\epsilon_{8}\|\nabla\cdot\delta \be_u^{n+1}\|_{L^2}^2
+ \tau\epsilon_{9} \|\varepsilon(\be_u^{n+1})\|_{\bL^2}^2
+C\tau\|\be_u^{n+1}\|_{\bL^2}^2,
\\ [1.2ex]
A_{11}
& \leq &  
C \tau (\|R_\rho^{n+1}\|_{L^2}^2 +\|\be_u^{n+1}\|_{\bL^2}^2).
\end{array}
\end{equation}


We now rewrite the term $\|e_p^{n+1}-{e_p^n}\|_{L^2}^2$ from the left handside of \eqref{eq:main_results_proof1_1} as follows:
$$
\begin{array}{ccl}
\frac{\tau}{\lambda}\|e_p^{n+1}-e_p^n\|_{L^2}^2
& = &  
\frac{\tau}{\lambda}\inner{e_p^{n+1}-e_p^n}{e_p^{n+1}-e_p^n}
\\
& = &  
\frac{\tau}{\lambda}\inner{p(t_{n+1})-p(t_n)}{e_p^{n+1}-e_p^n}-\frac{\tau}{\lambda}\inner{p^{n+1}-p^n}{e_p^{n+1}-e_p^n}
\\
& = &
E_1+E_2.
\end{array}
$$
Note that
\begin{equation}
\label{eq:error_bound_limit_E1}
| E_1 |
\leq 
C\tau^2
+C\tau(\frac{\tau}{\lambda}\|e_p^{n+1}\|_{L^2}^2
+\frac{\tau}{\lambda}\|e_p^n\|_{L^2}^2),
\end{equation}
and
$$
\begin{array}{ccl}
E_2
& = &  
\tau\inner{\nabla\cdot \bu^{n+1}}{e_p^{n+1}-e_p^n}
\\
& = &  
-\tau\inner{\nabla\cdot \be_u^{n+1}}{p(t_{n+1})-p(t_n) - (p^{n+1}-p^n)  }
\\
& = &
-\tau\inner{\nabla\cdot \be_u^{n+1}}{p(t_{n+1})-p(t_n)}
- \tau \lambda \inner{\nabla\cdot \be_u^{n+1}}{\nabla \cdot \bu^{n+1}}
\\
& \geq &  
- ( \tau \epsilon_{10} \|\nabla\cdot\be_u^{n+1}\|_{L^2}^2+C\tau^3)
+\tau\lambda\|\nabla\cdot \be_u^{n+1}\|_{L^2}^2
\end{array}
$$
where the bound on $E_2$ results from the incompressibility of $\bu(t^{n+1})$, the regularity hypothesis \eqref{hyp:regularity solutions}, and the artificial compressibility step of the algorithm, see \eqref{eq:algo_step_pre}. Moreover, using \eqref{eq:prelim_div} we get:
$$ 
\|\nabla\cdot\be_u^{n+1}\|_{L^2}^2
\leq
\frac{c_3}{\eta_{\min}}
\|\sqrt{\eta^{n+1}}\varepsilon(\be_u^{n+1})\|_{\bL^2}^2
.
$$

Combining the above bounds and injecting them into \eqref{eq:main_results_proof1_1} reads:
\begin{multline}
\label{eq:main_results_proof1_2}
(1 - C \tau) 
    \|\sqrt{\rho^{n+1}} \be_u^{n+1}\|_{\bL^2}^2
+ \frac{\tau}{\lambda} (1 - C \tau)
    \|{e_p^{n+1}}\|_{L^2}^2
+ 2\tau\Bar{\nu}\|\sqrt{\rho^{n+1}}\varepsilon(\be_u^{n+1})\|_{\bL^2}^2
+\tau\Bar{\lambda}\| \sqrt{\rho^{n+1}}\nabla\cdot \be_u^{n+1}\|_{L^2}^2
\\
+ \tau \left( 2 
    - \frac{\epsilon_1+\epsilon_2+\epsilon_5+\epsilon_9}{\eta_{\min}} 
    - 2\Bar{\nu} ( \epsilon_3+\epsilon_4) - \epsilon_6
    - \frac{c_3(\epsilon_7+\epsilon_{10})}{\eta_{\min}}
    \right)
    \|\sqrt{\eta^{n+1}}\varepsilon(\be_u^{n+1})\|_{\bL^2}^2
\\
+ (1-\frac{2\gamma^2}{\epsilon_3} - \frac{ \gamma^2}{\epsilon_6})
    \|\sqrt{\rho^n}\delta \be_u^{n+1}\|_{\bL^2}^2
+\tau ( \Bar{\lambda} - \frac{\lambda+ \epsilon_8}{\rho_{\min}})
    \| \sqrt{\rho^{n+1}}\nabla\cdot \delta \be_u^{n+1}\|_{L^2}^2
\\
\leq 
(1+C\tau) 
    \|\sqrt{\rho^n}\be_u^n\|_{\bL^2}^2
+ \frac{\tau}{\lambda}\|e_p^n\|_{L^2}^2
+ 2\tau\Bar{\nu}\|\sqrt{\rho^{n}}\varepsilon(\be_u^{n})\|_{\bL^2}^2
+\tau\Bar{\lambda}\| \sqrt{\rho^n}\nabla\cdot \be_u^n\|_{L^2}^2
\\
+ C \tau \left( \| e_\rho^{n+1}\|_{L^2}^2
+\| e_\rho^{n}\|_{L^2}^2
+\|R_\rho^{n+1}\|_{L^2}^2 \right)
+ C \tau^2
,
\end{multline}
where we use the minimum-maximum principle \eqref{hyp:min_max_principle}-\eqref{hyp:min_max_principle_eta} to get inequalities of the form:  
$$
\|\varepsilon(\be_u^{n+1})\|_{\bL^2}^2
\leq \frac{1}{\eta_{\min}}
\|\sqrt{\eta^{n+1}}\varepsilon(\be_u^{n+1})\|_{\bL^2}^2
, \quad
 \| \nabla\cdot \delta \be_u^{n+1}\|_{L^2}^2
\leq
\frac{1}{\rho_{\min}}\| \sqrt{\rho^{n+1}}\nabla\cdot \delta \be_u^{n+1}\|_{L^2}^2
, \quad
 \| \nabla\cdot \be_u^{n+1}\|_{L^2}^2
\leq
\frac{1}{\eta_{\min}}\| \sqrt{\eta^{n+1}}\nabla\cdot \be_u^{n+1}\|_{L^2}^2
.
$$
We remind that $\epsilon_i$ are positive constant independent of the approximation and time step, thus they can be chosen arbritrarly. 
Setting
$$\epsilon_1 = \epsilon_2 = \epsilon_5 = \epsilon_9 = \frac{\eta_{\min}}{16}
, \quad
\epsilon_3 = \epsilon_4 = \frac{1}{16\Bar{\nu}}
, \quad
\epsilon_6 = \frac{1}{4}
, \quad
\epsilon_7=\epsilon_{10} = \frac{\eta_{\min}}{8c_3}
, \quad
\epsilon_8 = \Bar{\lambda} - \frac{\lambda}{\rho_{\min}}
$$
and using the bounds \eqref{eq:lemma3_hypo1} on $\gamma$ leads to \eqref{eq:main_results} which concludes the proof.
\end{proof}

\begin{corollary}
\label{corollary_order_cvg}
Under the assumption of Theorem \ref{theorem:error_bound}, assuming that the time step $\tau$ is chosen such that $C \tau<1/2$, we have:
\begin{equation}
\label{eq:main_results_order_cvg}
\| \be_u\|_{l^\infty(0,T;\bL^2(\Omega))}^2
+ \| \be_u \|_{l^2(0,T;\bH^1(\Omega))}^2 
\leq \tilde{C} \tau
,
\end{equation}
where $\tilde{C}$ is a positive constant independent of the time step and the approximations $\rho^\tau, \bu^\tau, p^\tau$.
\end{corollary}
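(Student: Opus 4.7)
The plan is to iterate the one-step bound \eqref{eq:main_results} of Theorem \ref{theorem:error_bound} and close the resulting inequality with the density-error hypotheses \eqref{hyp:residual_density}--\eqref{hyp:error_density} via a discrete Gronwall argument.

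First, since $C\tau < 1/2$, we have $(1-C\tau)^{-1}\leq 2$ and $(1+C\tau)/(1-C\tau)\leq 1+4C\tau$. Dividing \eqref{eq:main_results} by $(1-C\tau)$ and absorbing generic constants into a new $\tilde{C}$ gives
\begin{equation*}
A^{n+1} + \tau B^{n+1} \leq (1+\tilde{C}\tau) A^n + \tilde{C}\tau^2 + \tilde{C}\tau \left(\|e_\rho^{n+1}\|_{L^2}^2 + \|e_\rho^n\|_{L^2}^2 + \|R_\rho^{n+1}\|_{L^2}^2\right).
\end{equation*}
I would then sum from $n=0$ to $n=m-1$ for any $1\leq m\leq N$; the left-hand side telescopes to $A^m - A^0 + \tau\sum_{n=1}^m B^n$. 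With consistent initialization $A^0 = 0$ and using $m\tau \leq T$ so that the constant-in-time contribution is $\tilde{C}m\tau^2 \leq \tilde{C}T\tau$, this yields
\begin{equation*}
A^m + \tau\sum_{n=1}^m B^n \leq \tilde{C}\tau + \tilde{C}\tau\sum_{n=0}^{m-1} A^n + 2\tilde{C}\tau\sum_{n=0}^{m}\|e_\rho^n\|_{L^2}^2 + \tilde{C}\tau\sum_{n=1}^{m}\|R_\rho^n\|_{L^2}^2.
\end{equation*}

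Next, I invoke the density-error bounds \eqref{hyp:residual_density}--\eqref{hyp:error_density} (applied at index $m$) to replace the last two sums by $\tilde{C}\tau^2 + \tilde{C}\tau\sum_{k=0}^{m-1}\|\be_u^k\|_{\bL^2}^2$. Using the minimum principle \eqref{hyp:min_max_principle}, we have $\|\be_u^k\|_{\bL^2}^2 \leq \rho_{\min}^{-1}\|\sqrt{\rho^k}\be_u^k\|_{\bL^2}^2 \leq \rho_{\min}^{-1} A^k$, so the inequality collapses to
\begin{equation*}
A^m + \tau\sum_{n=1}^m B^n \leq \tilde{C}\tau + \tilde{C}\tau\sum_{n=0}^{m-1} A^n,\qquad 1\leq m \leq N.
\end{equation*}

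Discarding the nonnegative $\tau\sum_n B^n$, the discrete Gronwall lemma applied to $A^m \leq \tilde{C}\tau + \tilde{C}\tau\sum_{n=0}^{m-1} A^n$ gives the uniform bound $\max_{0\leq m\leq N} A^m \leq \tilde{C}\tau\, e^{\tilde{C}T}$. Plugging this back into the displayed inequality yields $\tau\sum_{n=1}^N B^n \leq \tilde{C}\tau$ as well. Converting to the norms of the corollary is then automatic: $\|\be_u^n\|_{\bL^2}^2 \leq \rho_{\min}^{-1}A^n$ delivers the $l^\infty(0,T;\bL^2)$ bound, while $\|\sqrt{\eta^n}\varepsilon(\be_u^n)\|_{\bL^2}^2 \geq \eta_{\min}\|\varepsilon(\be_u^n)\|_{\bL^2}^2$ combined with the Korn and Poincar\'e inequalities \eqref{eq:prelim_Korn}--\eqref{eq:prelim_Poincare} delivers the $l^2(0,T;\bH^1)$ bound. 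The only delicate point is verifying that the Gronwall argument is not circular: the density-error hypotheses \eqref{hyp:residual_density}--\eqref{hyp:error_density} are formulated so that the controlling velocity-error sum only involves indices $k \leq m-1$, which is exactly the structure a forward-in-time discrete Gronwall requires.
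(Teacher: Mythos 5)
Your proposal is correct and follows essentially the same route as the paper: sum the one-step estimate \eqref{eq:main_results}, absorb the density terms via \eqref{hyp:residual_density}--\eqref{hyp:error_density} and the min--max principle, and conclude with the discrete Gr\"onwall lemma under $C\tau<1/2$. The only differences are cosmetic (you divide by $1-C\tau$ before summing rather than after, and you spell out the final conversion of $A^n$, $B^n$ into the $l^\infty(0,T;\bL^2)$ and $l^2(0,T;\bH^1)$ norms, which the paper leaves implicit).
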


\begin{proof}
First, we sum the inequality \eqref{eq:main_results} at iteration $k$ from $k=0$ till $n-1$. It reads:
$$
(1 - C \tau) \sum_{k=0}^n A^{k} 
+ \tau \sum_{k=0}^n B^k
\leq
\sum_{k=0}^{n-1} (1 + C \tau) A^k
+ C \tau
+ C \tau \sum_{k=0}^{n-1}
    \left(
    \| e_\rho^{k+1}\|^2 + \| e_\rho^k\|^2 + \|R_\rho^{k+1}\|^2
    \right)
,
$$
where we use that $A^0=B^0=0$ and that $\sum_{k=0}^{n-1} \tau^2 \leq \sum_{k=0}^N \tau^2 = T \tau$. Using the hypothesis \ref{hyp:residual_density}-\ref{hyp:error_density}, we can rewrite the above inequality as follows:
$$
(1 - C \tau) \sum_{k=1}^n A^{k} 
+ \tau \sum_{k=1}^n B^k
\leq
\sum_{k=0}^{n-1} (1 + C \tau) A^k
+ C \tau.
$$
Using that $0< C \tau <1/2$, we get $1 \leq \frac{1}{1-C\tau}\leq 2$.
Thus, dividing the above inequality by $1-C\tau$ yields:
$$
\sum_{k=0}^n A^{k} 
+ \tau \sum_{k=0}^n B^k
\leq
\sum_{k=0}^{n-1} (1+ 4 C \tau) A^k
+ 2 C \tau
.
$$
The error estimate \eqref{eq:main_results_order_cvg} is then a consequence from discrete Gr\"{o}nwall lemma.
\end{proof}

\begin{remark}
We note that the above results yield an order half-convergence for the velocity and do not provide order of convergence for the pressure. These limitations are the consequence of the presence of a term in $\tau^2$ in the inequalities \eqref{eq:error_bound_limit_B2}-\eqref{eq:error_bound_limit_A7}-\eqref{eq:error_bound_limit_A9}-\eqref{eq:error_bound_limit_E1}.
The author expect that the results of \ref{corollary_order_cvg} can be improved by making more restrictive assumptions on the density approximation and the exact pressure. 
First, following \cite{cappanera_vu_2024}, one can assume that $\GRAD \rho^\tau$ is a bounded sequence in $l^\infty(0,T;L^\infty(\Omega))$ to improve the bounds \eqref{eq:error_bound_limit_B2}-\eqref{eq:error_bound_limit_A7} to get a first order approximation. 
Then, following \cite{shen1995error} one can introduce condition on the pressure (see hypothesis A2 and proof of lemma 3.3 therein) that are expected to improve the term $C\tau^2$ to $C\tau^3$ when bounding pressure terms related to $A_9$ and $E_1$, see\eqref{eq:error_bound_limit_A9}-\eqref{eq:error_bound_limit_E1}. 
The authors did not follow these directions because it would have made the analysis more lengthy while assuming more restrictive conditions on the density approximation that are not relevant for immiscible multi-fluids. Indeed, for such problems the density's gradient scales like the inverse of the mesh size  and can not be bounded uniformly. We note that the numerical simulations presented in the following section show that the proposed method does converge with order one in time with an hyperbolic CFL condition (i.e. time step proportional to the mesh size) for all the unknowns even for problems where the density function is unsmooth.
\end{remark}

\section{Numerical Illustrations} 
\label{sec:num_results}
In this section, we study the convergence properties of the proposed artificial compressibility technique for incompressible flows with variable density and viscosity. After introducing an equivalent formulation that uses a level set technique to track the dynamics of the density, we test our semi-implicit scheme with manufactured solutions on two dimensional domains using the software FreeFEM++. Eventually, we investigate the properties of an explicit version of our scheme that is suitable for spectral and pseudo-spectral methods. The explicit version is validated on three dimensional problems using the software SFEMaNS. We note that the algorithms are tested on similar setups that have been used to validate a momentum based projection method that we introduced in \cite{cappanera_2018,cappanera_vu_2024}.

\subsection{Level set method}
\label{sec:num_level_set}
We rewrite the mass conservation equation \eqref{eq:mass_equation} using a level set representation technique. The method consists of introducing a function $\phi$ defined in $[0,T]\times\Omega$ that takes value in $[0,1]$ such that the density is a function of the level set $\phi$ that satisfies
\begin{equation}
\label{eq:level_set_equation}
\partial_t \phi + \bu \cdot \GRAD \phi = 0.
\end{equation}
In practice, we will use a linear dependency between the density and the level set. In the context of immiscible multiphase flow with variable density, denoting by $\rho_{\min}$ and $\rho_{\max}$ the minimum and maximum value of the density, this relation can be define as follows:
\begin{equation}
\label{eq:def_level_set}
\phi = \frac{\rho - \rho_{\min}}{\rho_{\max} - \rho_{\min}}.
\end{equation}
Due to the linear relation between $\rho$ and $\phi$, it is equivalent to solve the mass conservation equation problem \eqref{eq:mass_equation} for the density or for the above level set function.  We note that the relation between $\phi$ and $\rho$ does not need to be linear. For instance, a popular type of level set method consists of introducing a smooth level set with constant gradient equal to one and to use a distance function to $\phi^{-1}(0.5)$ to reconstruct the density. We refer to \cite{osher2001level} for a review on level set techniques.

The time discretization of the level set, that replaces \eqref{eq:algo_step_rho}, then reads as follows. Given $\phi^n$ and $\bu^n$, finds $\phi^{n+1}$ such that:
\begin{equation}
\label{eq:time_level_set}
\frac{\phi^{n+1} - \phi^n}{\tau} 
+ \bu^n \cdot \GRAD \phi^{n+1}
- \DIV(\nu_{h} \GRAD \phi^{n+1})
= 0,
\end{equation}
where $\nu_h$ is an artificial viscosity that is added to stabilize the algorithm. 
In the rest of the paper, we define $\nu_h$ as a first-order viscosity, meaning that it is made proportional to the local mesh size $h$. Although such stabilization method is known to be strongly diffusive, it has the advantage to be robust and first order, meaning it is consistent with the order of our scheme. We note that this formulation, that is equivalent to set $R_\rho^{n+1}= \DIV(\nu_{h} \GRAD \phi^{n+1})$ in \eqref{eq:algo_step_rho}, does not guarantee that the level set, and so the density, conserves its maximum-minimum principle. Such structure preserving techniques could be constructed using flux-transport corrected technology \cite{boris1973flux,chiu2011conservative,guermond2017conservative,kuzmin2012flux,zalesak1979fully} and less diffusive artificial viscosity \cite{nazarov2013residual,stiernstrom2021residual}. 
While such methods are out of the scope of this paper, we note that for problems with immiscible multifluids, such as the one studied in sections \ref{sec:num_SFEMaNS_nonsmooth_axisym} and \ref{sec:num_MPR_NST}, we apply a compression technique inspired from \cite{Kreiss}. The idea of this technique, originally introduced in \cite{HARTENIII} as a level set correction technique, consists of adding a term in \eqref{eq:time_level_set} that balances the diffusion effect of artificial viscosity. As a result, the level set keeps its original sharp profile near the fluids interface. For the framework considered here, where the level set has value in $[0,1]$ and the fluids interface is represented by $\phi=1/2$, the algorithm reads:
\begin{equation}
\label{eq:time_level_set_compression}
\frac{\phi^{n+1} - \phi^n}{\tau} 
+ \bu^n \cdot \GRAD \phi^{n+1}
- \DIV(\nu_{h} \GRAD \phi^{n+1})
+ \DIV \left(c_{comp} \nu_h h^{-1} \phi^n (1-\phi^n) 
\frac{\GRAD \phi^n}{||\GRAD \phi^n||_{l^2}} \right)
= 0,
\end{equation}
where $h$ is the local mesh size and $c_{comp}$ is a tunable parameter that is either set to 1 for immiscible fluids (sharp level set) and 0 for other problems (smooth level set for manufactured solutions). We refer to \cite{COUPEZ_NUMIFORM2007,COUPEZ2010,guermond2017conservative,cappanera_2018} for more information on this compression technique.

\subsection{Weak formulation for semi-implicit algorithm}
\label{sec:final_algo}

We consider two types of spatial discretization. For two-dimensional problems, used to validate the semi-implicit scheme \eqref{eq:algo_step_rho}-\eqref{eq:algo_step_pre}, we use a continuous Galerkin finite element method. For three-dimensional problems, used to validate an explicit version of the scheme, we use a pseudo-spectral representation where Fourier decomposition is used in the azimuthal direction $\theta$ and finite elements are used in a meridian plane $(r,z)$. In the second case, unknowns can be written as a Fourier series in $\cos(m \theta)$ and $sin(m\theta)$ where the $m$-th Fourier coefficient lives in a two-dimensional finite element space. Thus, to simplify notation, we only describe the spatial discretization for problem in $\Omega \subset \mathbb{R}^2$ using finite element. 

We introduce a mesh sequence $(\mathcal{E}_h)_{h}$ of the domain $\Omega$ that is conforming, shape regular and consists of simplex elements. The mesh size is defined as $h=\max_{K\in\mathcal{E}_h} h_K$, where $h_K$ represents the diameter of a cell $K \in \mathcal{E}_h$.
The density, momentum, and pressure are approximated in the following finite element spaces
\begin{align} \label{eq:space_FEM}
X_h & =\{\psi\in \calC^0(\Omega;\Real)\mid 
\psi_{|K} \in \polP_{2},\ \forall K\in \mathcal{E}_h\}, \\
\bX_h & =\{\bv\in \calC^0(\Omega;\Real^2)\mid 
\bv_{|K} \in \bpolP_2,\ \forall K\in \mathcal{E}_h\}, \\
M_h & =\{q\in \calC^0(\Omega;\Real)\mid 
q_{|K} \in \polP_{1},\ \forall K\in \mathcal{E}_h\},
\end{align}  
where $\polP_k$ represents the vector space of polynomials functions with a total degree
of at most $k$. The dynamical viscosity $\eta$, and velocity $\bu$, are approximated in $X_h$, and $\bX_h$, respectively. Notice that the couple momentum-pressure is approximated with Taylor-Hood finite element $\bpolP_2-\polP_{1}$. 

The fully discrete algorithm for the semi-implicit scheme \eqref{eq:algo_step_rho}-\eqref{eq:algo_step_pre} reads as follows. After initialization, given $(\phi^n, \rho^n, \eta^n, \bmom^n, \bu^n, p^n)$ for an integer $n\geq 0$, we first find $\phi^{n+1}\in X_h$ such that
\begin{multline}
\label{eq:final_scheme_level_set}
\int_\Omega  
\frac{\phi^{n+1} - \phi^n}{\tau} \psi
\diff\bx
+ \int_\Omega ( \bu^n \cdot \GRAD \phi^{n+1} )\psi \diff\bx
+ \int_\Omega \nu_h \GRAD \phi^{n+1} \cdot \GRAD \psi \diff\bx
- \int_\Omega \left(c_{comp} \nu_h h^{-1} \phi^n (1-\phi^n) 
\frac{\GRAD \phi^n}{||\GRAD \phi^n||_{l^2}} \right) \GRAD \psi \diff\bx
=
0,
\end{multline}
holds for all  $\psi \in X_h$.

The second step consists of computing the density $\rho^{n+1}$ and dynamical viscosity $\eta^{n+1}$ using that $\rho$ is a linear function of $\phi$, see \eqref{eq:def_level_set}, and that $\eta$ is a given function of the density. In practice, we also define $\eta$ as a linear function of the level set $\phi$.

For the third step, we find $\bmom^{n+1} \in \bX_h$ such that the following holds for all $\bv \in \bX_h$:
\begin{multline}
\label{eq:final_scheme_momentum}
 \int_\Omega \left(
 \frac{1}{\dt}(\bmom^{n+1}-\bmom^n)\SCAL\bv 
+ 2 \bar{\nu} \varepsilon(\bmom^{n+1}-\bmom^{*,n}){:}\varepsilon(\bv) 
+ \lambda \DIV (\bmom^{n+1}-\bmom^{*,n}) \DIV \bv
+ ( \bu^n \cdot \nabla)\bmom^{n+1} \cdot \bv
\right) \diff \bx
\\ = 
\int_\Omega \left(
-2\eta^{n+1}\varepsilon(\bu^n){:}\varepsilon(\bv) 
-\GRAD p^n \SCAL \bv  
- \lambda \DIV \bu^n \DIV \bv
+ \bef^{n+1} \SCAL\bv
\right)\diff\bx,
\end{multline}
where $\bmom^{*,n} = \rho^{n+1} \bu^n$.

Eventually, the velocity and pressure are computed as follows:
\begin{equation}
\label{eq:final_scheme_u_and_p}
\bu^{n+1} = \frac{1}{\rho^{n+1}}, \qquad \qquad p^{n+1} = p^n - \lambda \DIV \bu^{n+1}.
\end{equation}

\subsection{Tests for semi-implicit scheme with finite element method}
This section investigate numerically the stability and convergence properties of the above semi-implicit scheme. We consider two setups with manufactured solutions introduced by one of the authors in \cite{cappanera_vu_2024} to validate a momentum based projection method. Our results are consistent with the theoretical properties established in sections \ref{sec:stab_algo} and \ref{sec:time_error_analysis}. In fact, numerical results suggest that the scheme yields better properties as the scheme remain stable under classic CFL condition, i.e. $\tau \sim h$, and we observe a first order convergence. We also note that these results can be compared with Tables 5 and 9 of \cite{cappanera_vu_2024}. They show that the proposed artificial compressibility technique, while presenting the advantage of a straightforward pressure update that does not require solving a Poisson problem, behaves similarly than the projection method described in \cite{cappanera_vu_2024}.
All the simulations presented in this section are done using the finite element software FreeFEM++. We refer to \cite{hecht2012new} for more information on this software.

\subsubsection{Test with $\eta(\rho)$ linear}
First, we consider a setup inspired from \cite{li2021} and introduced in \cite{cappanera_vu_2024} to validate a pressure-correction projection method for incompressible flows with variable density and viscosity. We consider the Navier-Stokes equations \eqref{eq:Navier_Stokes} on the tine interval $[0,1]$ and set the domain to the unit disk $\Omega=\{(x,y)\in \mathbb{R}^2 ; \; |x^2+y^2| < 1\}$. The velocity, level set and pressure are defined as follows: 
\[\bu(x,y,t) = \left(\frac{3}{4} + \frac{1}{4}\sin(t) \right)\begin{bmatrix}
    -\sin^2(x)\sin(y)\cos(y)\\
    \sin(x)\cos(x)\sin^2(y)
\end{bmatrix},\]
\[p(x,y,t) =\sin(x)\sin(y)\sin(t), \]
\[\phi(x,y,t) = \frac{1}{2} + \frac{1}{2} \sqrt{x^2+y^2}\cos(\theta - \sin(0.5 t)).\]
The dynamical viscosity is assumed to be a linear function of the density, which is itself a linear function of the level set. Thus, we define them as follows:
\[\rho(x,y,t) =  F(\phi) = \rho_1 + (\rho_2-\rho_1) \phi(x,y,t),\]
\[\eta(x,y,t) = \eta(\rho) =  \eta_1 + \frac{\eta_2-\eta_1}{\rho_2-\rho_1}(\rho(x,y,t)-\rho_1),\]
where $(\rho_1,\rho_2,\eta_1,\eta_2)$ are parameters that allow us to vary the ratio of magnitude of density and viscosity. The source term $\bef$ is computed accordingly. We note that a source term $f_\phi$ is also added to the level set equation so the level set is solution of the transport problem \eqref{eq:level_set_equation}.
We perform two set of tests using $(\rho_1,\rho_2)=(1,100), (\eta_1,\eta_2)=(1,10)$ and $(\rho_1,\rho_2)=(1,100), (\eta_1,\eta_2)=(1,0.01)$ for mesh sizes varying from $0.1$ to $0.00625$ with a time step defined by $\tau=h/2$. Our results are reported in Tables \ref{tab:semi_linear_eta}-\ref{tab:semi_linear_etab}. They recover an order of convergence equal to one under classic CFL condition. We also note that these results compare very well with a project method introduced by one of the authors that requires a Poisson problem to update the pressure, see Table 5 from \cite{cappanera_vu_2024}.
\begin{table}[ht] 
\caption{Semi-implicit scheme. Relative $L^2$ error for linear $\eta(\rho)$ with $(\rho_1,\rho_2)=(1,100), (\eta_1,\eta_2)=(1,10)$, $\tau=h/2$, $\nu_h=0.125 h$, $c_{comp}=0$, and $\lambda=1$.}
    \centering
    \begin{tabular}{|l|c|c||c|c||c|c|}
    	\hline
    	$\tau$ & \multicolumn{2}{c||}{Velocity}   & \multicolumn{2}{c||}{Pressure} & \multicolumn{2}{c|}{Density}  \\ \hline
    	& $L^2$ error & Order & $L^2$ error & Order& $L^2$ error & Order \\ \hline 
		0.05 	& 3.49E-2	&- 		& 2.80E0	&- 	& 1.03E-2	&- \\\hline
		0.025 	& 1.65E-2	& 1.09	& 1.16E0 	& 1.28	& 6.00E-3  	& 0.79 \\\hline
		0.0125 	& 8.14E-3	& 1.02	& 4.32E-1	& 1.42	& 3.17E-3	& 0.92 \\\hline
		0.00625 & 3.56E-3	& 1.19	& 1.08E-1	& 2.00	& 1.43E-3	& 1.15 \\\hline
		0.003125& 1.47E-3	& 1.28	& 4.74E-2	& 1.19	& 7.38E-04	& 0.95 \\
		\hline
    \end{tabular}
\label{tab:semi_linear_eta}
\end{table} 
\begin{table}[ht] 
\caption{Semi-implicit scheme. Relative $L^2$ error for linear $\eta(\rho)$ with $(\rho_1,\rho_2)=(1,100), (\eta_1,\eta_2)=(0.01,1)$, $\tau=h/2$, $\nu_h=0.125 h$, $c_{comp}=0$, and $\lambda=1$.}
    \centering
    \begin{tabular}{|l|c|c||c|c||c|c|}
    	\hline
    	$\tau$ & \multicolumn{2}{c||}{Velocity}   & \multicolumn{2}{c||}{Pressure} & \multicolumn{2}{c|}{Density}  \\ \hline
    	& $L^2$ error & Order & $L^2$ error & Order& $L^2$ error & Order \\ \hline 
		0.05 	& 5.95E-2  &- 	& 	2.39E0&- 	& 	1.05E-2&- \\\hline
		0.025 	& 2.37E-2  & 	1.33&  	4.72E-1& 	2.34&   5.98E-3& 0.82 \\\hline
		0.0125 	& 	7.99E-3& 	1.57& 	3.08E-1& 	0.62& 	3.21E-3&  0.90\\\hline
		0.00625 & 	3.01E-3& 	1.41& 	1.59E-1& 	0.95& 	1.46E-3&1.13 \\\hline
		0.003125& 	1.35E-3& 	1.16& 	9.61E-2& 0.73   & 	7.59E-4&  0.95\\
		\hline
    \end{tabular}
\label{tab:semi_linear_etab}
\end{table}

\subsubsection{Test with $\eta(\rho)$ nonlinear.}
For the second test, following \cite{cappanera_vu_2024}, we use the same setup that in the previous test to the difference that the dynamical viscosity is defined as nonlinear Lipschitz function of the density. We set:
\[\eta(x,y,t) = \eta(\rho) = \frac{1}{\rho(x,y,t)},\]
and update the definition of the source term $\bef$ accordingly. The parameter $(\rho_1,\rho_2)$ are set to $(1,100)$. Thus, in addition to consider a problem with large ratio of magnitude for the density, the kinematic viscosity $\nu=\eta/\rho$ has its magnitude varying from $1$ to $10^{-4}$. Thus, it presents an effective ratio of $10^4$ (versus $10^1$ in the previous section). Our results, that use the same space-time discretization than in the previous section, are reported in Table \ref{tab:semi_lipschitz_eta}. They confirm that the algorithm converges with order 1 under classic CFL condition even for problems with a nonlinear dynamical viscosity and a kinematic viscosity that presents large magnitude variation. Therefore, we infer that some of the hypothesis used in the stability and convergence analysis, see \eqref{eq:theorem1_hypo1}-\eqref{eq:theorem1_hypo2}, are heuristic and that in practice the algorithm is first-order and stable under the condition $\tau \propto h$. 
\begin{table}[ht] 
\caption{Semi-implicit scheme. Relative $L^2$ error for nonlinear $\eta(\rho)=\rho^{-1}$ with $(\rho_1,\rho_2)=(1,100)$, $\tau=h/2$, $\nu_h=0.125 h$, $c_{comp}=0$, and $\lambda=1$.}
    \centering
    \begin{tabular}{|l|c|c||c|c||c|c|}
    	\hline
    	$\tau$ & \multicolumn{2}{c||}{Velocity}   & \multicolumn{2}{c||}{Pressure} & \multicolumn{2}{c|}{Density}  \\ \hline
    	& $L^2$ error & Order & $L^2$ error & Order& $L^2$ error & Order \\ \hline 
		0.05 	& 5.19E-2&- 	& 2.86E0 &- 	& 1.03E-2	&- \\\hline
		0.025 	& 2.72E-2& 0.93	& 1.05E0 & 1.45	& 5.93E-3	& 0.80\\\hline
		0.0125 	& 1.54E-2& 0.82	& 3.38E-1& 1.64& 3.08E-3	& 0.94\\\hline
		0.00625 & 5.95E-3& 1.37	& 2.37E-1& 0.51	& 1.39E-3	& 1.15 \\\hline
		0.003125& 2.86E-3& 1.06	& 1.17E-1& 1.02	&	7.80E-4& 0.84\\
		\hline
    \end{tabular}
\label{tab:semi_lipschitz_eta}
\end{table}

\begin{remark} Although we only report three sets of tests, we note that the proposed artificial compressibility method has been validated over all the numerical setups considered in \cite{cappanera_vu_2024}. For all tests, our results showed that the error in velocity and level set were equivalent while the error in pressure was smaller for the projection method by a factor varying between 2 and 4. Thus, at this point, we can claim that the proposed artificial compressibility behaves similarly than the projection method described in \cite{cappanera_2018,cappanera_vu_2024} while not requiring to solve a Poisson problem to compute the pressure. The next section aims to show the advantages of using the artificial compressibility technique when studying immiscible incompressible multiphase flows.
\end{remark}

\subsection{Tests for explicit scheme with pseudo-spectral method}
\label{sec:num_SFEMaNS_results}
In this section, we investigate the stability and convergence properties of an explicit version of the scheme \eqref{eq:final_scheme_level_set}-\eqref{eq:final_scheme_u_and_p}. The main advantage of the explicit scheme is that it is suitable for spectral or pseudo-spectral code as nonlinearities are treated explicitly. Thus, all the numerical simulations reported in this section are performed with the pseudo-spectral code SFEMaNS. We remind that this 3D code uses a Fourier decomposition in the azimuthal direction and finite element in a 2D meridian plane $(r,z$). We refer to \cite{GLLN07} for more information on this software.

The objectives of this section are two folds.  First, we will show that the explicit counterpart of the semi-implicit scheme analyzed in sections \ref{sec:stab_algo}-\ref{sec:time_error_analysis} remains convergent and stable under classic CFL (i.e. $\tau\propto h$). Second, we will show that the proposed artificial compressibility method can approximate the solution of complex immiscible incompressible flows that the momentum-based projection method we previously introduced in \cite{cappanera_2018,cappanera_vu_2024} cannot approximate accurately.

\subsubsection{Time discretization for explicit algorithm}
\label{sec:num_explitcit_scheme}
The explicit counter part of the semi-implicit scheme \eqref{eq:final_scheme_level_set}-\eqref{eq:final_scheme_u_and_p} introduces two modifications. First, the transport term $\bu \cdot \GRAD \phi$, in the mass conservation equation, is treated explicitly. Second, the nonlinear term $(\bu \cdot\GRAD) \bmom$ in the Navier-Stokes equations is also treated explicitly. After initialization, the algorithm reads as follows for $n\geq 0$.

Step 1: Find $\phi^{n+1}$ solution of
\begin{equation}
\label{eq:scheme_exp_level_set}
\frac{\phi^{n+1} - \phi^n}{\tau}
+ \bu^n \cdot \GRAD \phi^n 
- \DIV (\nu_h \GRAD \phi^{n+1}) 
- \int_\Omega \left(c_{comp} \nu_h h^{-1} \phi^n (1-\phi^n) 
\frac{\GRAD \phi^n}{||\GRAD \phi^n||_{l^2}} \right) \GRAD \psi \diff\bx
=0,
\end{equation}
where the artificial viscosity $\nu_h$ is defined in section \ref{sec:num_level_set}.

Step 2: Compute the density $\rho^{n+1}$ using its linear dependency on $\phi^{n+1}$.

Step 3: Compute the dynamical viscosity $\eta^{n+1}$ using its dependency on $\rho^{n+1}$.

Step 4: Compute $\bmom^{n+1}$
\begin{multline}
\label{eq:scheme_exp_momentum}
\frac{\bmom^{n+1} - \bmom^n}{\tau}
- \DIV \left( \bar{\nu} \varepsilon(\bmom^{n+1} - \bmom^{*,n}) \right)
- \bar{\lambda} \GRAD \left( \DIV(\bmom^{n+1}-\bmom^{*,n}) \right)
= \bef^{n+1} 
- \DIV \left(\eta^{n+1} \varepsilon(\bu^{n}) \right)
- \GRAD p^n 
- \lambda \GRAD \left( \DIV \bu^n \right)
- (\bu^n \cdot \GRAD) \bmom^n,
\end{multline}
where $\bmom^{*,n}=\rho^{n+1}\bu^n$.

Step 5: Compute $\bu^{n+1}$ and $p^{n+1}$ using \eqref{eq:final_scheme_u_and_p}.

\begin{remark}
We note that the main advantages of the explicit algorithm, compared to the semi-implicit algorithm, is that the algorithm is suitable for pseudo-spectral methods. Moreover, it involves a stiffness matrix that is time independent so it can be assembled and preconditioned at initialization.
\end{remark}

\subsubsection{Smooth manufactured solution}
\label{sec:num_SFEMaNS_smooth_nonaxisym}

Using cylindrical coordinates,$(r,\theta,z)$, we define the computational domain as the cylinder $\Dom =\{ (r,\theta,z) \in [0,1]\times[0,2\pi) \times [-1,1] \}$. The time interval is set to $[0,1]$ and the analytical solutions are defined as follows
\begin{align*}
\phi(r,\theta,z,t) &= 0.5 
+ 0.25 \left( \cos(4(t-z)) 
+ r^2 \cos(4(t-z))  \sin(2 \theta) \right)
, \\
 \bu(r,\theta,z,t) &= \left(r z^2 \sin(t-z) \cos(2 \theta) , - r z^2 \sin(t-z) \sin(2 \theta), 1 \right)\tr
, \\ 
p(r,\theta,z,t) &= r^2 z^3 \cos(t) + r^2 \cos(t-z) \sin(\theta) + r z \sin(t-r) \cos(2\theta).
\end{align*}
The density and dynamical viscosity are reconstructed as a linear function of the level set by setting:
$$ \rho =\rho_0 + (\rho_1 - \rho_0) \phi,$$
$$\eta(\rho) = \eta_0 + (\eta_1 - \eta_0) \phi,$$
where the constant $(\rho_0, \rho_1)$, $(\eta_0,\eta_1)$ are user defined parameter used to set the maximum and minimum of density and dynamical viscosity. The source term $\bef$ is computed accordingly. 

We first check the convergence of our algorithm using a moderate ratio of density and viscosity, 
meaning that we set $(\rho_0,\rho_1)=(1,10)$, $(\eta_0,\eta_1)=(0.1,1)$. 
Then, we perform a second series of simulations with $(\rho_0,\rho_1)=(1,100)$, $(\eta_0,\eta_1)=(0.01,1)$ to check the influence of larger ratio of density and viscosities (dynamical and kinematic). 
The time step $\tau$ is set to $h/40$ with $h$ being the mesh size of the computational domain. 
All tests are performed with $c_\text{comp}=0$ and $\lambda=1$.
Our results are displayed in Tables \ref{tab:smooth_3D_rho_1_10_eta_0p1_1}-\ref{tab:smooth_3D_rho_1_100_eta_0p01_1}.
They show that the explicit version of the semi-implicit algorithm \eqref{eq:final_scheme_level_set}-\eqref{eq:final_scheme_u_and_p} is also first-order and stable under classic CFL condition when smooth level set functions are considered.
\begin{table}[ht]
\caption{Explicit scheme. Relative $L^2$ error for test with smooth level set, 
$\tau=h/40$, $\nu_h=0.125h$, $c_{comp}=0$, and $\lambda=1$. Degree of freedom per $\polP_2$ scalar unknown is denoted by $n_\text{df}$.}
\centering
\begin{tabular}{|c|c|c|c|c|c|c|c|c|} \hline\multicolumn{3}{|c|}{$\bL^2$-norm of error}
& \multicolumn{2}{|c|}{Velocity}
& \multicolumn{2}{|c|}{Pressure}
& \multicolumn{2}{|c|}{Level set} \\ \hline \hline 
&  $h$  & $n_\text{df}$ & Error  & Rate  & Error  & Rate   &Error & Rate   \\ \hline
\multirow{5}{*}{\shortstack{$(\rho_0,\rho_1)=(1,10)$ \\ $(\eta_0,\eta_1)=(0.1,1)$}}
&$0.1$    & 1017  		&1.23E-3 
 & -  	&1.37E-2 & - 	& 1.67E-2 & -   \\ \cline{2-9} 
&$0.05$  & 3821 & 6.61E-4 &0.90 & 	7.24E-3& 0.92&  8.96E-3&0.90 \\  \cline{2-9} 
&$0.02$  & 23437  &2.73E-4 & 1.28& 2.96E-3& 1.29 &3.71E-3 & 1.27\\  \cline{2-9} 
&$0.01$   &93173 & 1.38E-4&0.98 & 1.50E-3 & 	0.98& 1.88E-3& 0.98\\  \cline{2-9} 
&$0.005$ & 379689 &6.97E-5&0.99& 7.66E-4 &0.97 &9.54E-4 	& 0.98  \\
\hline
\end{tabular} 
\label{tab:smooth_3D_rho_1_10_eta_0p1_1}
\end{table}
\begin{table}[ht]
\caption{Explicit scheme. Relative $L^2$ error for test with smooth level set, 
$\tau=h/40$, $\nu_h=0.125h$, $c_{comp}=0$, and $\lambda=1$. Degree of freedom per $\polP_2$ scalar unknown is denoted by $n_\text{df}$. We set $\bmom^{*,n}=\rho^{n+1}\bu^n$ for mode {0}, and $\bmom^{*,n}=\rho^n\bu^n$ for all larger modes.}
\centering
\begin{tabular}{|c|c|c|c|c|c|c|c|c|} \hline\multicolumn{3}{|c|}{$\bL^2$-norm of error}
& \multicolumn{2}{|c|}{Velocity}
& \multicolumn{2}{|c|}{Pressure}
& \multicolumn{2}{|c|}{Level set} \\ \hline \hline 
&  $h$  & $n_\text{df}$ & Error  & Rate  & Error  & Rate   &Error & Rate   \\ \hline
\multirow{5}{*}{\shortstack{$(\rho_0,\rho_1)=(1,100)$ \\ $(\eta_0,\eta_1)=(0.01,1)$}}
&$0.1$    & 1017  		& 5.25E-3
 & -  	& 2.25E-1& - 	& 1.71E-2 & -   \\ \cline{2-9} 
&$0.05$  & 3821 &  2.63E-3& 0.99& 	1.15E-1& 0.97& 9.12E-3 &0.90 \\  \cline{2-9} 
&$0.02$  & 23437  &1.10E-3 & 1.26& 3.90E-2&  1.56& 3.77E-3& 1.28\\  \cline{2-9} 
&$0.01$   &93173 &5.56E-4 & 0.98&2.03E-2  & 0.94	& 1.92E-3& 0.98\\  \cline{2-9} 
&$0.005$ & 379689 &2.82E-4&0.98& 1.09E-2 &0.90 &9.70E-4 	&0.98   \\
\hline
\end{tabular} 
\label{tab:smooth_3D_rho_1_100_eta_0p01_1}
\end{table}

\subsubsection{Unsmooth axisymmetric manufactured solution}
\label{sec:num_SFEMaNS_nonsmooth_axisym}
After we checked the convergence properties of the algorithm for smooth level set in the previous section, we now show that the algorithm remains convergent and stable under classic CFL condition when the level set function, 
and so the density, is not smooth. 
This test is performed on the the same 
computational domain than in the previous section. 
The analytical solutions are defined by:
\[
\phi(r,\theta,z,t) = \mathbbm{1}_{(z<t-0.5)},
\qquad \bu(r,\theta,z,t) =  (0, r^2 \sin(t-z), 0.5)\tr,
\qquad p = r^2 z^3 \cos(t),
\]
where $\mathbbm{1}_{(z<t-0.5)}$ is the function that is equal to one when $z<t-0.5$ and is equal to zero elsewhere. The density is reconstructed as a linear function of the level set by setting $\rho=1+\phi$ while the dynamical viscosity is set to a constant $\eta=1$ so the source term does not involve a Dirac function. 
Therefore, for this setup, the source term $\bef$ is defined as follows:
\[
\bef=(
-\rho r^3 \sin^2(t-z) + 2r z^3 \cos(t), 
(r^2-3) \sin(t-z) + \frac{r^2 \rho}{2} \cos(t-z)
, 3 r^2 z^2 \cos(t)
)\tr.
\]
Since the level set presents a sharp profile, it can be seen as a test involving two immiscible fluids of respective density $\rho_0$ and $\rho_1$. Thus, all the test are performed using $c_\text{comp}=1$, see section \ref{sec:num_level_set} for more details on the compression technique. To run with a CFL number of order one (around $0.5$), the time step $\tau$ is set to $h/20$ where $h$ is the mesh size.
We perform a series of test with a mesh size $h$ ranging from $0.1$ to $0.005$ in $\polP_1$. Our results are summarized in Table \ref{tab:test_nonsmooth_3D}. We display the relative errors for the level set in $\bL^1$ norm and the relative errors for the velocity and pressure in $\bL^2$ norm.  We note that as the level set is discontinuous, the best rate of convergence in space is limited to one in $L^1$-norm and half in $L^2$-norm. Thus, we compute the level set error in $L^1$ norm, and not in $L^2$-norm like for the previous tests. Our results are consistent with a first-order scheme that is stable under classic CFL conditions. Thus, we conclude that the theoretical restrictive conditions\eqref{eq:theorem1_hypo1}-\eqref{eq:theorem1_hypo2} do not need to be enforced in practice and that we expect the semi-implicit and explicit algorithm to remain first order under the condition $\tau\propto h$. 
\begin{table}[ht]
\caption{Explicit scheme. Relative $L^2$ and $L^1$ errors for test with nonsmooth level set, 
$\tau=h/5$, $\nu_h=0.125h$, $c_{comp}=1$, and $\lambda=1$. Degree of freedom per $\polP_2$ scalar unknown is denoted by $n_\text{df}$.}
\centering
\begin{tabular}{|c|c|c|c|c|c|c|c|} \hline   
  \multicolumn{2}{|c|}{ }   & \multicolumn{2}{|c|}{Velocity, $\bL^2$-norm}
  & \multicolumn{2}{|c|}{Pressure, $L^2$-norm}
  & \multicolumn{2}{|c|}{Level set, $L^1$-norm} \\ \hline \hline 
	$h$		& $n_\text{df}$  & Error  & Rate & Error  & Rate  & Error  & Rate  \\ \hline
	$0.1$  	& 1017  & 1.52E-3  & -  	& 4.65E-3	& -  	& 3.01E-2 	& -  \\ \cline{1-8} 
	$0.05$ 	& 3821  & 7.89E-4 	& 0.95 	& 2.44E-3 	& 0.93 	& 1.62E-2	& 0.90 \\ \cline{1-8} 
	$0.02$  & 23437 & 3.22E-5 	& 0.98 	& 1.02E-3	& 0.96 	& 6.71E-3  & 0.96  \\  \cline{1-8} 
	$0.01$ 	& 93173 & 1.62E-5 	& 0.99 	& 5.21E-4  & 0.96 	& 3.35E-3	& 1.00 \\  \cline{1-8} 
	$0.005$	& 379689& 8.15E-5	& 1.00	& 2.71E-4 	& 0.94 	& 1.66E-3 	& 1.02  \\
\hline
\end{tabular}
\label{tab:test_nonsmooth_3D}
\end{table}

\subsubsection{Gravitational waves in cylinder}
\label{sec:num_MPR_NST}

We conclude our numerical investigations by presenting a more challenging setup where a stratification of two immiscible fluids is driven by gravitational waves. This setup was originally introduced in \cite{herreman2019perturbation} as a benchmark to validate magnetohydrodynamics software for two fluids and was extended to three fluids in \cite{herreman2023stability}. The original idea of the magnetohydrodynamics setup is to reproduce the metal pad roll instability. This instability is well-known in metallurgy as it leads to a tilting and rotating motion of the liquid metal in an aluminum production cell because of the combination of gravity and Lorentz forces. The hydrodynamics set up, considered here, focuses on studying a tilting motion of the fluids' interface due to gravitational waves. Thanks to a linear perturbation analysis, it is shown in \cite{herreman2019perturbation,herreman2023stability} that introducing a small perturbation of the initial interface at rest ($z=0$) allows us to determine theoretically the dispersion relation $\omega$ (related to the frequency of oscillations) and the viscous damping (related to the magnitude of the oscillations) of the instability. Moreover, both quantities only depend on the physical parameters of the problem (e.g. domain, fluids' density). In the following, we briefly describe the hydrodynamic setting considered here. Then, we discuss the results we obtain with the artificial compressibility technique described in section \ref{sec:num_explitcit_scheme} and compare them with the results obtained with the projection method analyzed in \cite{cappanera_vu_2024}.

The computational domain is set to the cylinder of radius two centimeters and height to four centimeters and we set
$D=\{(r,\theta,z)\in [0.02]\times[0,2\pi]\times[-0.02,0.02] \}$. The cylinder is filled with a stratification of two immiscible fluids of respective density $(\rho_1,\rho_2)$, and respective dynamical viscosity $(\eta_1,\eta_2)$ such that the heavier fluid lays at the bottom of the cylinder. At initialization, the interface between both fluids is set to $z=0$. The heights of the bottom and top fluids are denoted by $H_1, H_2$ and are both equal to $0.02m$. The initial interface is then perturbed by adding a velocity that creates a vertical deformation of the form $A J_m(k_{m1} r) e^{(im\theta+\omega t)}e^{\lambda t}$ where $A$ is an arbitrary amplitude, $J_m$ is the first kind Bessel function, $K=k_{m1}R^{-1}$ where $R=0.02$ is the radius of the cylinder and $k_{m1}$ if the first zero of $J_m^\prime$. The dispersion relation is then given by:
\begin{equation}
    \omega=\pm \sqrt{\frac{(\rho_2-\rho_1)gK}{\rho_1 \tanh^{-1}{(KH_1)}+\rho_2 \tanh^{-1}{(KH_2)}}},
\end{equation}
where the gravity is set to $g=9.81ms^{-2}$.
The viscous damping $\lambda_{visc}$ is given in \cite{herreman2019perturbation}, see formula 2.66. The effective dispersion relation is given by $\omega_{corr} = \omega + \lambda_{visc}$. We refer to \cite{herreman2019perturbation} for more details on the physical setup.

In the following, we set $A=10^{-6}$, $m=1$, $H_1=H_2=R=0.02$. The fluids properties are set to:
$$
\begin{cases}
(\rho_1, \rho_2)=(6077,13475) kg.m^{-3},\\
(\eta_1, \eta_2)=(1.686,1.417)\times 10^{-3} m^{2}.s^{-1}.\end{cases}
$$
Therefore, the theoretical dispersion relations and viscous damping are given by:
$$
\begin{cases}
\omega=18.0259865641184, \\
\lambda_{visc}=-0.106761084127129,\\
\omega_{corr}=\omega+\lambda_{visc}=17.9192254799912.
\end{cases}\\
$$
We run two series of tests where the mesh size is set respectively to $h=0.001$ and $h=0.0005$ in $\polP_1$. The final time is set to $T=5s$ and each series of tests is performed with time steps $\tau$ varying from $0.02$ to $0.00025$. The results we obtain with the explicit artificial compressibility technique are displayed in Table \ref{tab:test_MPR_NST_hmult3}. All simulations are performed with $\lambda=1$ and $c_{comp}=1$. They confirm that the algorithm is first-order. On the other hand, we also display the results we obtain with the projection method from \cite{cappanera_vu_2024} in Table \ref{tab:test_MPR_NST_hmult3_proj_sym}. It shows that the projection method, that also uses the momentum as primary unknown, fails to capture the correct dynamics. This phenomena was originally observed by one of the authors when studying incompressible multiphase magnetohydrodynamics setup whose fluids's interface undergoes strong deformation \cite{nore2021feasibility,herreman2021efficient}. It led us to develop the artificial compressibility technique that we presented and analyzed in this paper.


\begin{table}[ht]
\caption{Gravitational wave: errors obtained with explicit artificial compressibility techniques. $\nu_h=0.125 h$, $c_{comp}=1$, and $\lambda=1$.}
\centering
\begin{tabular}{|c|c|c|c|c|c|c|c|} \hline
$h$ &  $\tau$  & $\omega_{num}$ 
	& Error $\omega_{corr} $ & Rate
	& $\lambda_{visc-num}$ & Error $\lambda_{visc}$  & Rate \\ \hline \hline
\multirow{4}{*}{0.001}
& $0.02$    & 1.53E1 & 1.47E-1 & --& -2.95E-1 & 1.77E0 & -- \\ \cline{2-8} 
& $0.001$   & 1.67E1 & 6.99E-2 & 1.07& -2.24E-1 & 1.10E0 & 0.69 \\ \cline{2-8} 
& $0.0005$  & 1.74E1 & 2.79E-2 & 1.32& -1.69E-1 & 5.86E-1 & 0.91 \\ \cline{2-8} 
& $0.00025$ & 1.78E1 & 6.29E-3 & 2.15& -1.39E-1 & 3.06E-1 & 0.94 \\ \hline \hline
\multirow{4}{*}{0.0005}
& $0.02$    & 1.53E1 & 1.43E-1 & -- & -2.70E-01 & 1.53E0 & -- \\ \cline{2-8} 
& $0.001$   & 1.68E1 & 6.37E-2 & 1.17& -2.21E-1 & 1.07E0 & 0.51 \\ \cline{2-8}
& $0.0005$  & 1.76E1 & 1.95E-2 &1.71 & -1.64E-1 & 5.38E-1 & 0.99 \\ \cline{2-8} 
& $0.00025$ & 1.80E1 & 4.56E-3 &2.47 & -1.34E-1 & 2.55E-1 & 1.08 \\ \hline
\end{tabular} 
\label{tab:test_MPR_NST_hmult3}
\end{table}

\begin{table}[ht]
\caption{Gravitational waves: errors obtained with explicit projection method from \cite{cappanera_vu_2024}. $\nu_h=0.125 h$ and $c_{comp}=1$.}
\centering
\begin{tabular}{|c|c|c|c|c|c|c|c|} \hline
$h$ &  $\tau$  & $\omega_{num}$ 
	& Error $\omega_{corr} $ & Rate
	& $\lambda_{visc-num}$ 
    & Error $\lambda_{visc}$  & Rate 
    \\ \hline \hline
\multirow{4}{*}{0.001}
& $0.02$    & 1.62E1 & 9.59E-2 & -- & -5.69E-1 & 4.33E0 & -- \\ \cline{2-8} 
& $0.001$   & 1.63E1 & 8.95E-2 & 0.10  & -4.51E-1  & 3.22E0 & 0.43 \\ \cline{2-8}
& $0.0005$  & 1.63E1 & 8.76E-2 & 0.03  & -3.87E-1  & 2.62E0 & 0.30 \\ \cline{2-8} 
& $0.00025$ & 1.63E1 & 8.72E-2 & 0.01  & -3.54E-1  & 2.32E0 & 0.18 \\ \hline
\hline
\multirow{4}{*}{0.0005}
& $0.02$    & 1.63E1 & 8.99E-2 & -- & -6.07E-1 & 4.69E0 & -- \\ \cline{2-8} 
& $0.001$   & 1.65E1 & 7.71E-2 & 0.22    & -4.94E-1 & 3.63E0 & 0.37 \\ \cline{2-8}
& $0.0005$  & 1.66E1 & 7.36E-2 & 0.07    & -4.30E-1 & 3.03E0 & 0.26 \\ \cline{2-8} 
& $0.00025$ & 1.66E1 & 7.25E-2 & 0.02    & -3.97E-1 & 2.72E0 & 0.16 \\ \hline
\end{tabular} 
\label{tab:test_MPR_NST_hmult3_proj_sym}
\end{table}


\section{Conclusions}\label{sec:conclusion}
We introduce a semi-implicit algorithm for the incompressible Navier-Stokes 
equations with variable density and viscosity that combines
an artificial compressibility method and a momentum based approximation. We establish the stability and temporal convergence of the proposed algorithm under the assumption that the density approximation satisfies a min-max principle and is first order in time. While theoretical results involve non optimal criterion, like a time step bounded by the square of the density's gradient magnitude, we showed that in practice the algorithm is stable and convergent under classic CFL condition even when the density presents large ratio of magnitude or is discontinuous. Eventually, we also introduce an explicit version of the algorithm and show that it maintains the same stability and convergence properties than its semi-implicit counterpart but has the advantage of being suitable for spectral methods. To compare our artificial compressibility method with a projection-type algorithm introduced by one of the author in \cite{cappanera_2018}, we consider similar setups than the one studied in \cite{cappanera_vu_2024} which shows that both methods behave similarly on problems with manufactured solutions. We conclude our investigations with a more challenging setup, involving immiscible incompressible flow driven by gravitational waves. Our results show that the proposed method recovers theoretical results, such as wave's oscillation frequency and viscous dumping from \cite{herreman2019perturbation} while the projection method from \cite{cappanera_2018} does not.
For future investigations, we consider developing a second-order method based on the recent development of high-order artificial compressibility methods for flows with constant density in \cite{guermond2017high,guermond2019high} and flows with variable density in \cite{lundgren2023high}. 
Another direction consists in adapting the Scalar Auxiliary Variable (SAV) technique, see \cite{shen2018convergence,jiang2023artificial,zhang2023mass}, to our framework which would result in an unconditionally stable explicit artificial compressibility method for flows with variable density.





\bmsection*{Financial disclosure}

This work was supported by the National Science Foundation NSF (L. Cappanera, grant number DMS-2208046)

\bmsection*{Conflict of interest}

The authors declare that they have no potential conflict of interest.

\bibliography{wileyNJD-AMS}

\end{document}